\definecolor{yelloworange}{RGB}{255,148,0}
\theoremstyle{plain}
\newtheorem{theorem}{Theorem}[section]
\newtheorem{lemma}[theorem]{Lemma} 
\newtheorem{proposition}[theorem]{Proposition} 
\newtheorem{thmintro}{Theorem}
\crefname{thmintro}{Theorem}{Theorems}
\theoremstyle{definition}
\theoremstyle{remark}
\newtheorem{remark}[theorem]{Remark}
\newcommand{\bAlphabet}{
  \def\do##1{\expandafter\newcommand\csname b##1\endcsname{\mathbb{##1}}}
  \docsvlist{C,F,P,Q,R,Z}
}
\newcommand{\cAlphabet}{
  \def\do##1{\expandafter\newcommand\csname c##1\endcsname{\mathcal{##1}}}
  \docsvlist{A,D,O,T}
}
\newcommand{\id}{\mathrm{id}}
\newcommand{\End}{\operatorname{End}}
\newcommand{\Gal}{\operatorname{Gal}}
\newcommand{\N}{\operatorname{N}}
\newcommand{\Res}{\operatorname{Res}}
\newcommand{\rk}{\operatorname{rk}}
\newcommand{\Tr}{\operatorname{Tr}}
\title{Dynamical degrees of automorphisms \\of K3 surfaces with Picard number $2$}
\date{September 18, 2025}
\author{Yuta Takada}
\author{Yuta Takada\thanks{Mathematical Sciences, the University of Tokyo, 
Meguro Komaba 3-8-1, Tokyo, Japan; 
JSPS Research Fellow. Saarland University, Saarbr\"ucken 66123, Germany. 
}}
\begin{document}

\maketitle
\renewcommand{\thefootnote}{\fnsymbol{footnote}} 
\footnotetext{MSC(2020): 14J28, 14J50, 11H56.}     
\renewcommand{\thefootnote}{\arabic{footnote}} 

\begin{abstract}
We show that there exists an automorphism of a projective K3 surface with Picard number $2$ 
such that the trace of its action on the Picard lattice is $3$. 
Together with a result of K.~Hashimoto, J.~Keum and K.~Lee, we determine the set of 
dynamical degrees of automorphisms of projective K3 surfaces with Picard number $2$. 
\end{abstract}

\section{Introduction}
Let $S$ be a compact K\"ahler surface. 
The intersection form makes $H^2(S,\bZ)_f$ a unimodular lattice, where  
$H^2(S,\bZ)_f \coloneqq H^2(S,\bZ)/\textrm{torsion}$.
Let $\phi$ be an automorphism of $S$. 
Then, the induced homomorphism $\phi^*:H^2(S, \bZ)_f \to H^2(S, \bZ)_f$
is an isometry of the lattice $H^2(S, \bZ)_f$. 
Let $\lambda(\phi)$ denote the spectral radius of 
$\phi^*:H^2(S, \bC) \to H^2(S, \bC)$:
\[ \lambda(\phi) \coloneqq \max\{ |\mu| \mid \text{$\mu\in \bC$ is an eigenvalue of
$\phi^*:H^2(S,\bC) \to H^2(S,\bC)$} \}. 
\] 
By the Gromov--Yomdin theorem, 
the topological entropy of $\phi$ is given by $\log\lambda(\phi)$. 
We call the value $\lambda(\phi)$ the (first) \textit{dynamical degree} of $\phi$
(a general definition of dynamical degrees can be found in \cite{Dinh-Sibony2005}). 
According to S.~Cantat \cite{Cantat1999}, if a compact complex surface admits an 
automorphism $\phi$ with $\lambda(\phi) > 1$, then the surface is bimeromorphic to 
a torus, a K3 surface, an Enriques surface, or a rational surface.  

A \textit{K3 surface} is a compact complex surface $S$ such that it has a nowhere 
vanishing holomorphic $2$-form $\omega_S$ and its irregularity 
($= \dim H^1(S, \cO_S)$) is zero. Every K3 surface is K\"{a}hler \cite{Siu1983}. 
Let $S$ be a K3 surface. 
The submodule $P_S \coloneqq H^2(S, \bZ)\cap H^{1,1}(S)$ of $H^2(S, \bZ)$ is 
called the \textit{Picard lattice} or the \textit{N\'eron-Severi lattice}. 
The \textit{Picard number} $\rho_S$ is the rank of $P_S$. 
The second cohomology group $H^2(S,\bZ)$ is a free $\bZ$-module of rank $22$ and, 
equipped with the intersection form, forms an even unimodular lattice of signature $(3,19)$. 
We refer to an even unimodular lattice of signature $(3,19)$ as a \textit{K3 lattice}. 
It is known that such a lattice is unique up to isomorphism. 

Let $\phi$ be an automorphism of $S$. 
It is known that $\lambda(\phi)$ is equal to either $1$ or a \textit{Salem number}, 
that is, a real algebraic integer $\beta > 1$
such that $\beta^{-1}$ is a conjugate of $\beta$ and all of its conjugates other than 
$\beta$ and $\beta^{-1}$ have absolute value $1$. 
The statement also holds for an automorphism of a compact K\"ahler surface, 
see \cite[Theorem 3.2]{McMullen2002} and \cite[Theorem 2.5]{Oguiso2010}. 
A natural question is to determine which Salem numbers can appear as dynamical degrees 
of automorphisms of K3 surfaces. In this article, we determine the set  
\[ 
\cD \coloneqq \left\{ \lambda(\phi) \;\middle|\; 
\begin{tabular}{l}
\text{$\phi$ is an automorphism of a projective K3 surface} \\
\text{with Picard number $2$}
\end{tabular}
\right\}.
\]

Assume that $S$ is a projective K3 surface. 
Then $\lambda(\phi)$ is given by the spectral radius of the restriction $\phi^*|_{P_S}$. 
Hence, the degree (as an algebraic number) of $\lambda(\phi)$ is at most $\rho_S$. 
Suppose further that $\rho_S = 2$ and $\lambda(\phi) > 1$.   
Then $\lambda(\phi)$ must be a Salem number of degree $2$. 
Its minimal polynomial is $X^2 - (\lambda(\phi) + \lambda(\phi)^{-1})X + 1$, and 
in particular, $\lambda(\phi) + \lambda(\phi)^{-1} \in \bZ_{\geq 3}$.  
So, we consider the set 
\begin{equation}\label{eq:defofcT}
\cT \coloneqq \left\{ \lambda(\phi) + \lambda(\phi)^{-1} \;\middle|\; 
\begin{tabular}{l}
\text{$\phi$ is an automorphism of a projective K3 surface} \\
\text{with Picard number $2$}
\end{tabular}
\right\}
\end{equation}
rather than $\cD$. Note that 
$\cD = \{ (\tau + \sqrt{\tau^2 - 4})/2 \mid \tau\in \cT \}$. 

In their paper \cite{Hashimoto-Keum-Lee2020}, K.~Hashimoto, J.~Keum and K.~Lee study the cases
$\phi^*\omega_S = \omega_S$ and $\phi^*\omega_S = -\omega_S$, 
and prove the following theorem. 

\begin{theorem}[{\cite[Main Theorem]{Hashimoto-Keum-Lee2020}}]\label{th:HKL}
Let $\tau \geq 3$ and $\epsilon\in \{1, -1\}$. Put  
$A_1 = \bZ_{\geq 4}$ and $A_{-1} = \bZ_{\geq 4}\setminus \{5, 7, 13, 17\}$. 
The following are equivalent:
\begin{enumerate}
\item There exists an automorphism $\phi$ of a projective K3 surface $S$
with Picard number $2$ such that 
$\lambda(\phi) + \lambda(\phi)^{-1} = \tau$ and $\phi^*\omega_S = \epsilon\omega_S$. 
\item There exists $\alpha \in A_\epsilon$ such that $\tau = \alpha^2 - 2\epsilon$. 
\end{enumerate}
\end{theorem}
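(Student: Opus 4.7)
The plan is to use the strong Torelli theorem for K3 surfaces to reduce (i) to a lattice-theoretic problem, and then carry out an arithmetic classification via Nikulin's theory of discriminant forms. By surjectivity of the period map and the strong Torelli theorem, (i) is equivalent to the existence of an even rank-$2$ lattice $M$ of signature $(1,1)$, an isometry $F \in O(M)$ with characteristic polynomial $X^2 - \tau X + 1$, a primitive embedding $M \hookrightarrow \Lambda_{K3}$, and a lift of $F \oplus \epsilon \cdot \mathrm{id}_{M^{\perp}}$ to a Hodge isometry of $\Lambda_{K3}$ that preserves an ample class. Preservation of the ample cone is readily arranged in Picard rank $2$ by choosing a Picard lattice $M$ containing no $(-2)$-vector or, equivalently, picking the period generically: then the positive cone has no reflection walls, and the determinant-$+1$ isometry $F$ preserves each of its two components.

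\textbf{Picard-side parametrization.} A rank-$2$ signature-$(1,1)$ lattice with an isometry $F$ of trace $\tau$ and determinant $1$ is, via $\bZ[F] \cong \bZ[x]/(x^2 - \tau x + 1)$, a proper ideal in some order of the real quadratic field $\bQ(\sqrt{\tau^2 - 4})$. Writing down Gram matrices explicitly and imposing that the pair $(F, \epsilon \cdot \mathrm{id}_{M^{\perp}})$ glues to an isometry of $\Lambda_{K3}$ translates, via Nikulin's gluing criterion, into a compatibility condition between the $F$-action on the discriminant group $A_M$ and scalar multiplication by $\epsilon$ on $A_{M^{\perp}}$. This compatibility amounts to a congruence $F \equiv \epsilon \cdot \mathrm{id}$ on a suitable sublattice of $M^{\vee}$, which a direct computation shows is equivalent to $\tau + 2\epsilon$ being a perfect square $\alpha^2$, yielding the formula $\tau = \alpha^2 - 2\epsilon$.

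\textbf{Extension to $\Lambda_{K3}$ and the exception list.} Combining Nikulin's existence criterion for primitive embeddings into the unimodular K3 lattice with his criterion for extending isometries, the extension problem for $F \oplus \epsilon \cdot \mathrm{id}_{M^{\perp}}$ is governed by local conditions at primes dividing $\det M$. For $\epsilon = +1$ the conditions hold for every $\alpha \geq 4$, giving $A_1 = \bZ_{\geq 4}$; the cases $\alpha \in \{2, 3\}$ fail either because the resulting $\tau$ violates $\tau \geq 3$ or because the gluing cannot be realized. For $\epsilon = -1$, the requirement that $-\mathrm{id}$ extend across the complement $T$ of signature $(2,18)$ is strictly stronger, and a case-by-case $p$-adic analysis using Nikulin's criterion rules out precisely $\alpha \in \{5, 7, 13, 17\}$.

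\textbf{Main obstacle.} The most delicate part is the negative direction in the anti-symplectic case: proving that no K3 surface with the required structure exists for $\alpha \in \{5, 7, 13, 17\}$. This entails enumerating the possible genera of rank-$2$ lattices with the prescribed discriminant, computing the induced $F$-action on each discriminant form, and verifying via Nikulin's gluing criterion that in each of these four cases no signature-$(2,18)$ transcendental lattice accepts the companion action of $-\mathrm{id}$. Pinpointing exactly this finite list, while simultaneously exhibiting explicit Gram matrices and K3 realizations for every other $\alpha \geq 4$, is the core arithmetic content of the theorem.
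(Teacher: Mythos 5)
First, note that the paper does not prove \cref{th:HKL} at all: it is the Main Theorem of Hashimoto--Keum--Lee \cite{Hashimoto-Keum-Lee2020}, imported as a black box, so there is no in-paper argument to compare yours against. Judged on its own terms, your proposal is a reasonable strategy outline --- Torelli plus surjectivity of the period map to reduce to lattice theory, then Nikulin-style gluing of the Picard-lattice isometry $F$ with $\epsilon\cdot\id$ on the transcendental lattice --- and this is indeed the general shape of the argument in the cited paper. But as written it is not a proof: it defers every step that carries actual content.

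Concretely: (1) the claim that the discriminant-form compatibility ``is equivalent to $\tau+2\epsilon$ being a perfect square'' is asserted with no computation; the clean route to this necessity is the square condition on $|F(1)|$ and $|F(-1)|$ for characteristic polynomials of isometries of even unimodular lattices (\cref{th:SquareCondition}), applied to $(X^2-\tau X+1)(X-\epsilon)^{20}$, which you never invoke (this is exactly how the present paper derives the analogous constraint in \cref{prop:sq_upm2}). (2) The determination of the exceptional set $\{5,7,13,17\}$ for $\epsilon=-1$, which is the entire point of the theorem, is reduced to ``a case-by-case $p$-adic analysis using Nikulin's criterion'' that is not performed; your closing paragraph explicitly concedes that this core arithmetic is missing, so the hardest direction (nonexistence for those four values, existence for all other $\alpha\geq 4$) remains unproved. (3) A smaller but genuine error: you argue that $F$ preserves the two components of the positive cone ``because it has determinant $+1$''; this is false in signature $(1,1)$, where $-\id$ has determinant $+1$ and swaps the components. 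The correct reason is that $F$ has trace $\tau>0$ and determinant $1$, hence two positive real eigenvalues $\lambda^{\pm 1}$, and therefore fixes each isotropic ray bounding the cone. In sum, you have identified the right framework, but the statement is not established by what you wrote; if the theorem is to be used, it should be cited (as the paper does) or the arithmetic classification carried out in full.
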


It follows from this theorem that 
\begin{equation*}
\{2\}\cup\{\alpha^2 - 2 \mid \alpha\in \bZ_{\geq 4}\}\cup
\{ \alpha^2 + 2 \mid \alpha\in \bZ_{\geq 4}\setminus \{5, 7, 13, 17\} \} \subset \cT,  
\end{equation*}
where $2\in \cT$ since $\lambda(\id) + \lambda(\id)^{-1} = 2$. 
This article fills the gap left by this inclusion. 

\begin{thmintro}\label{th:A}
There exists a projective K3 surface $S$ with Picard number $2$ and an automorphism 
$\phi$ of $S$ such that the characteristic polynomial of the induced homomorphism 
$\phi^*:H^2(S, \bC)\to H^2(S, \bC)$ is $(X^2 - 3X + 1)\cdot\Phi_{50}(X)$, 
where $\Phi_{50}(X)$ is the $50$th cyclotomic polynomial. 
\end{thmintro}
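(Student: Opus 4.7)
The plan is to build an isometry $F$ of the K3 lattice $L$ with characteristic polynomial $(X^2-3X+1)\Phi_{50}(X)$, equip $L$ with a K3-type Hodge structure for which $F$ is a Hodge isometry, and apply the strong Torelli theorem to realize $F$ as $\phi^*$ for an automorphism $\phi$.

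Since $X^2-3X+1$ and $\Phi_{50}(X)$ are coprime, $F$ splits rationally as $F_P \oplus F_T$ on $L_\bQ = P_\bQ \oplus T_\bQ$ with $\mathrm{rk}\,P = 2$ and $\mathrm{rk}\,T = 20$. A signature count---the real reciprocal eigenvalue pair $(3\pm\sqrt 5)/2$ of $F_P$ yielding a rank-$2$ subspace of signature $(1,1)$, and each of the ten complex-conjugate unit-circle eigenvalue pairs of $F_T$ yielding a real $F_T$-invariant $2$-plane of signature $(2,0)$ or $(0,2)$---forces $\mathrm{sgn}(P) = (1,1)$ and $\mathrm{sgn}(T) = (2,18)$, with exactly one of the ten planes in $T \otimes \bR$ being positive definite. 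I then construct $P$ as a rank-$2$ even lattice with Gram matrix $d \bigl(\begin{smallmatrix} 2 & 3 \\ 3 & 2 \end{smallmatrix}\bigr)$ for a nonzero integer $d$ (which admits an isometry $F_P$ of characteristic polynomial $X^2-3X+1$ and contains no $(-2)$-class when $|d|\geq 2$), and $T$ as $\bZ[\zeta_{50}]$ (or an $F_T$-stable modification) equipped with a twisted trace form $\langle x,y\rangle = \Tr_{\bQ(\zeta_{50})/\bQ}(\alpha x \bar y)$, where $\alpha$ is a totally real algebraic integer having exactly one positive real embedding; then $F_T$ is multiplication by $\zeta_{50}$, and the distinguished positive plane corresponds to a prescribed primitive $50$th root $\zeta$. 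To assemble $L$, I glue $P$ and $T$ into an even unimodular overlattice via an anti-isometric and $(F_P, F_T)$-equivariant isomorphism between subgroups of the discriminant groups $A_P$ and $A_T$; by uniqueness of the K3 lattice the overlattice is $L$, and $F_P \oplus F_T$ extends to $F \in O(L)$.

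I then equip $L$ with a Hodge structure by setting $L^{2,0} \coloneqq \bC\omega$, with $\omega$ spanning the $\zeta$-eigenline of $F_T$ on $T \otimes \bC$. The relations $\omega^2 = 0$ and $\omega\cdot\bar\omega>0$ hold by construction, so this is a Hodge structure of K3 type, and $F$ is a Hodge isometry because it acts on $L^{2,0}$ by the scalar $\zeta$. By surjectivity of the period map there is a K3 surface $S$ with $H^2(S,\bZ)\cong L$ as Hodge lattices; $S$ is projective with Picard lattice $P$ of rank $2$ since $P$ is indefinite. Because $P$ contains no $(-2)$-class, the K\"ahler cone of $S$ equals one connected component of the positive cone in $P_\bR$, and $F_P$ preserves this component since its eigenvalues $(3\pm\sqrt 5)/2$ are both positive. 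The strong Torelli theorem now yields a unique automorphism $\phi \in \mathrm{Aut}(S)$ with $\phi^* = F$, whose characteristic polynomial on $H^2(S,\bC)$ is $(X^2-3X+1)\Phi_{50}(X)$, as required.

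The principal obstacle is the discriminant-form matching in the gluing. The discriminant of $\bZ[\zeta_{50}]$ under any twist is essentially a power of $5$, whereas the discriminant group of $P$ must contain $2$-torsion, so a direct gluing fails; one must replace $T$ with a suitable $F_T$-invariant sublattice or overlattice of $\bZ[\zeta_{50}]$ whose discriminant form acquires the requisite $2$-part, and simultaneously choose $d$ and $\alpha$ so that the discriminant forms admit an $(F_P, F_T)$-equivariant anti-isometry of matching subgroups. This prime-by-prime construction in Nikulin's discriminant-form calculus is the technical core of the argument; the rest of the proof follows the standard Torelli-theoretic template.
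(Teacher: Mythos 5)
Your overall template is the same as the paper's: split off a rank-$2$ piece $P$ of signature $(1,1)$ carrying an isometry with characteristic polynomial $X^2-3X+1$ and a rank-$20$ piece $T=\bZ[\zeta_{50}]$ with a twisted trace form of signature $(2,18)$, glue them into the K3 lattice equivariantly, and finish with surjectivity of the period map and the Torelli theorem. That outer shell is fine (including the observation that $P$ must be scaled to kill $(-2)$-classes, and the positivity criterion for the signature of the twist). The problem is that you explicitly defer what you yourself call ``the technical core'' --- the existence of an $(F_P,F_T)$-equivariant anti-isometry of discriminant forms --- and your diagnosis of what that step requires is wrong on both sides. You assert that the discriminant of any twist of $\bZ[\zeta_{50}]$ is ``essentially a power of $5$''; in fact twisting by $a\in\bZ[\zeta+\zeta^{-1}]$ multiplies the determinant by $\N_{K/\bQ}(a)$, which introduces exactly the new primes one needs. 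And you assert that $G(P)$ ``must contain $2$-torsion''; nothing forces this (with your $P=d\bigl(\begin{smallmatrix}2&3\\3&2\end{smallmatrix}\bigr)$ and $d$ odd there is none), so the proposed fix --- modifying $T$ to acquire a $2$-part --- is aimed at a nonexistent obstruction.

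The genuine obstruction, which your argument never identifies, is arithmetic: once $|d|\geq 2$, the glue group $G(P)$ acquires a $p$-part for each prime $p\mid d$ on which $\bar F_P$ acts with characteristic polynomial $Q\bmod p$, while on any $p$-part of $G(T)$ the induced map $\bar t$ acts with characteristic polynomial dividing $\Phi_{50}\bmod p$. An equivariant identification therefore forces $\gcd(Q,\Phi_{50})\neq 1$ in $\bF_p[X]$, i.e.\ $p\mid\Res(Q,\Phi_{50})$ --- the ``feasible prime'' condition. This is why an arbitrary scaling factor $d$ cannot work and why the paper takes $d=3001$ (a prime factor of $\Res(Q,\Phi_{50})$, over which $Q$ splits as $(X+121)(X-124)$ inside $\Phi_{50}\bmod 3001$) and twists $T$ by an explicit $a\in\cO_k$ with $\N_{k/\bQ}(a)=3001$, checking by direct computation that the $5$-parts and $3001$-parts of the two glue groups match with opposite torsion forms and intertwined isometries, and that the twist has signature $(2,18)$. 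Without producing such a $p$ and such an $a$ (and verifying the torsion-form and signature conditions, which are genuinely computational), the existence of the glued isometry --- and hence the theorem --- is not established.
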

To prove this theorem, we will construct an isometry of a K3 lattice 
with characteristic polynomial $(X^2 - 3X + 1)\cdot\Phi_{50}(X)$ by 
using methods in \cite{McMullen2011, McMullen2016}. Then, it leads to the existence of 
the desired automorphism of a K3 surface by the Torelli theorem and 
surjectivity of the period mapping. 

Let $\phi$ be the automorphism in \cref{th:A}. 
Then $\lambda(\phi) + \lambda(\phi)^{-1} = 3$, which implies that 
$3 = 1^2 + 2 \in \cT$. Furthermore, we get $7 = 3^2 - 2 \in \cT$ because 
\[ \lambda(\phi^2) + \lambda(\phi^2)^{-1} 
= \lambda(\phi)^2 + \lambda(\phi)^{-2} 
= (\lambda(\phi) + \lambda(\phi)^{-1})^2 - 2 = 7. 
\]
Hence,  
\begin{equation}\label{eq:lowerboundofcT}
\{2\}\cup \{\alpha^2 - 2 \mid \alpha\in \bZ_{\geq 3}\}\cup
\{ \alpha^2 + 2 \mid \alpha\in \bZ_{\geq 1}\setminus \{2, 3, 5, 7, 13, 17\} \} \subset \cT.  
\end{equation}
Moreover, some further discussions combined with \cref{th:HKL} show that 
the inclusion is, in fact, an equality. 

\begin{thmintro}\label{th:B}
We have 
\[ \cT = \{2\}\cup \{\alpha^2 - 2 \mid \alpha\in \bZ_{\geq 3}\}\cup
\{ \alpha^2 + 2 \mid \alpha\in \bZ_{\geq 1}\setminus \{2, 3, 5, 7, 13, 17\} \}.  
\]
\end{thmintro}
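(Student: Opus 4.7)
The inclusion $\supseteq$ is the content of~\eqref{eq:lowerboundofcT}, so I focus on
\[ \cT\subseteq\{2\}\cup\{\alpha^2-2\mid\alpha\in\bZ_{\geq 3}\}\cup\{\alpha^2+2\mid\alpha\in\bZ_{\geq 1}\setminus\{2,3,5,7,13,17\}\}. \]
Take $\phi$ an automorphism of a projective K3 surface $S$ with $\rho_S=2$, and set $\tau:=\lambda(\phi)+\lambda(\phi)^{-1}$; we may assume $\tau\geq 3$, so $\lambda(\phi)>1$. The transcendental lattice $T_S:=P_S^{\perp}$ has rank $20$, and the isometry $\phi^*|_{T_S}$ decomposes $T_S\otimes\bR$ into the positive-definite $2$-plane $(H^{2,0}\oplus H^{0,2})\cap(T_S\otimes\bR)$ and its negative-definite orthogonal complement. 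Consequently every eigenvalue of $\phi^*|_{T_S}$ lies on the unit circle, and Kronecker's theorem implies that the characteristic polynomial of $\phi^*|_{T_S}$ is a product of cyclotomic polynomials. In particular $\delta:=\phi^*|_{H^{2,0}}$ is a root of unity, say of order $n$, with $\varphi(n)\leq 20$.

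If $n\in\{1,2\}$ then $\phi^*\omega_S=\pm\omega_S$ and \cref{th:HKL} puts $\tau$ in the target set directly. So the crux is the case $n\geq 3$, where \cref{th:HKL} does not apply to $\phi$ itself. My plan is to apply it to powers. The automorphism $\phi^n$ is symplectic with $\lambda(\phi^n)=\lambda(\phi)^n>1$, so \cref{th:HKL} with $\epsilon=1$ gives
\[ \lambda(\phi)^n+\lambda(\phi)^{-n}=\alpha^2-2 \text{ for some } \alpha\in\bZ_{\geq 4}, \]
and when $n$ is even, $\phi^{n/2}$ is anti-symplectic with $\lambda(\phi^{n/2})>1$, so \cref{th:HKL} with $\epsilon=-1$ gives $\lambda(\phi)^{n/2}+\lambda(\phi)^{-n/2}=\beta^2+2$ for some $\beta\in\bZ_{\geq 4}\setminus\{5,7,13,17\}$. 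Writing $P_k(\tau):=\lambda(\phi)^k+\lambda(\phi)^{-k}\in\bZ$, polynomial identities such as
\[ P_3(\tau)+2=(\tau-1)^2(\tau+2),\quad P_4(\tau)+2=(\tau^2-2)^2,\quad P_6(\tau)+2=\tau^2(\tau^2-3)^2 \]
(and analogous factorizations of $P_k(\tau)-2$) turn the above Diophantine equations into conditions forcing $\tau\pm 2$ to be a perfect square, yielding candidate values $\tau=\alpha^2\pm 2$.

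The main obstacle I anticipate is that these power-based constraints are not tight enough by themselves: for instance $\tau=6$ with $n=6$ passes both the symplectic condition ($\alpha=198$) and the anti-symplectic one ($\beta=14$), yet $6$ must be excluded from $\cT$. Ruling out such cases requires lattice-theoretic input. A rank-$2$ even lattice of signature $(1,1)$ admitting an isometry of trace $\tau$ and determinant $1$ is an invariant lattice for multiplication by a fundamental unit in an order of $\bQ(\sqrt{\tau^2-4})$, and its isomorphism class---in particular its discriminant form---is confined to a finite explicit list depending on $\tau$. Since $H^2(S,\bZ)_f$ is unimodular, the discriminant forms of $P_S$ and $T_S$ are anti-isometric, and the anti-isometry must intertwine the actions of $\phi^*$. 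Only finitely many $n$ satisfy $\varphi(n)\leq 20$, so a finite case analysis---for each such $n$, listing the admissible cyclotomic factorizations of the characteristic polynomial of $\phi^*|_{T_S}$, computing the resulting discriminant form, and matching it against the list of discriminant forms of rank-$2$ Picard lattices carrying a trace-$\tau$ isometry---combined with the power-based conditions above is expected to eliminate every $\tau$ outside the target set, yielding the desired inclusion.
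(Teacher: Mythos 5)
Your reduction is fine up to the point where $n\geq 3$, and the observations that $\phi^{n}$ is symplectic and $\phi^{n/2}$ anti-symplectic are correct. But the proof has a genuine gap: the argument you actually carry out (applying \cref{th:HKL} to powers of $\phi$ and factoring $P_k(\tau)\pm 2$) does not establish the upper bound, and you concede as much. Indeed $P_2(\tau)+2=\tau^2$ and $P_4(\tau)+2=(\tau^2-2)^2$ are squares for \emph{every} $\tau$, so for many $n$ the power trick yields no constraint, and your own example $\tau=6$, $n=6$ survives both the symplectic and anti-symplectic tests while lying outside the asserted set. The fallback you then describe --- matching discriminant forms of rank-$2$ trace-$\tau$ lattices against those of the transcendental lattice for each $n$ with $\varphi(n)\leq 20$ --- is only a programme (``is expected to eliminate every $\tau$''), with no case worked out and no argument that it terminates in the stated answer. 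A proof cannot end with an expectation, so the inclusion $\subseteq$ is not established.

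The paper closes this gap with a tool you never invoke: for an isometry of an even unimodular lattice of rank $2n$ with characteristic polynomial $F$, the integers $|F(1)|$, $|F(-1)|$ and $(-1)^nF(1)F(-1)$ are perfect squares (\cref{th:SquareCondition}). Combined with the structure result that $F=(X^2-\tau X+1)\Phi_l(X)^m$ with $m\varphi(l)=20$ (a single cyclotomic factor, not merely a product --- this is \cite[Corollary 8.13]{Kondo_K3surfaces} and is stronger than what Kronecker gives you), evaluating at $X=\pm1$ forces $m\in\{1,5,20\}$ and $l\in\{1,2,5,10,25,50\}$, and shows that $\tau+2\epsilon$ is a square, where $\epsilon=1$ for $l$ odd and $\epsilon=-1$ for $l$ even; moreover $5(\tau-2\epsilon)$ is a square when $l\neq 1,2$. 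The first condition already places $\tau$ in $\{\alpha^2-2\}\cup\{\alpha^2+2\}$, and the exclusion of $\alpha\in\{2,3,5,7,13,17\}$ when $\epsilon=-1$ follows from \cref{th:HKL} directly when $l=2$, and from the elementary fact that $5\nmid\alpha^2+4$ for those $\alpha$ when $l=10,50$. In particular this disposes of your $\tau=6$ example at once ($\tau+2=8$ and $\tau-2=4$: neither choice of $\epsilon$ makes $\tau+2\epsilon$ a square with the required side condition). Replacing your power/discriminant-form machinery by this application of \cref{th:SquareCondition} is what is needed to complete the argument.
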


\begin{remark}
P.~Reschke \cite{Reschke2012, Reschke2017} shows that every Salem number of degree $2$ is 
realized as the dynamical degree of an automorphism of a complex $2$-torus 
(in this case, the torus must be projective). 
Let $\psi$ be an automorphism of a $2$-torus $A$ whose dynamical degree is a given 
Salem number $\beta$ of degree $2$. Then $\psi$ gives rise to an automorphism $\phi$ 
of the Kummer surface obtained from $A$, and $\phi$ also has dynamical degree $\beta$, 
see \cite[\S4]{McMullen2002}. Hence, every Salem number of degree $2$ is realized as the 
dynamical degree of an automorphism of a projective K3 surface.  
However, we remark that the Picard number of a Kummer surface is greater than or 
equal to $16$.
\end{remark}

\begin{remark}
Let $S$ be a non-projective K3 surface and $\phi$ an automorphism of $S$ with 
$\lambda(\phi) > 1$. Then $\lambda(\phi)$ is a Salem number of degree at least $4$. 
Furthermore, if $d$ denotes the degree of $\lambda(\phi)$, then $\rho_S = 22 - d$.  

It is shown that every Salem number of degree $d$ is realizable as the dynamical degree 
of an automorphism of a non-projective K3 surface for $d = 4, 6, 8, 12, 14, 16$ by 
E.~Bayer-Fluckiger \cite{Bayer-Fluckiger2025}, and for $d = 20$ by the author \cite{Takada2023} 
(a simple characterization in the case $d=22$ is given in \cite{Bayer-Fluckiger--Taelman2020}). 
In particular, as a non-projective analog of \cref{th:B}, we have 
\[ \begin{split}
&\left\{ \lambda(\phi) \;\middle|\; 
\begin{tabular}{l}
\text{$\phi$ is an automorphism of a non-projective K3 surface} \\
\text{with Picard number $2$}
\end{tabular}
\right\} \\
&\quad = \{1\}\cup\{\text{the Salem numbers of degree $20$}\}.
\end{split} \]
\end{remark}

\section{Lattices}\label{sec:Lattices}
This section summarizes some known results on lattices. 
For more details on \textit{gluing} and \textit{twist}, we refer to \cite{McMullen2011}.  

\paragraph{Lattices and isometries}
A \textit{lattice} is a finitely generated free $\bZ$-module $L$ equipped with 
an inner product, i.e., a nondegenerate symmetric bilinear form $b:L \times L \to \bZ$. 
Let $L = (L, b)$ be a lattice. Its \textit{dual} is defined to be
\[ L^\vee \coloneqq \{ y \in L\otimes\bQ \mid b(y, x) \in \bZ  
\text{ for all $x\in L$ }\},  \]
where $b$ is extended on $L\otimes \bQ$ linearly. 
Since $b$ is $\bZ$-valued, we have $L \subset L^\vee$. 
The lattice $L$ is \textit{unimodular} if $L = L^\vee$. 
We say that $L$ is \textit{even} if $b(x,x)\in 2\bZ$ for all $x\in L$. 
The \textit{signature} of $L$ is the signature of the inner product space over $\bR$ 
obtained by extending scalars to $\bR$. 

Let $e_1, \ldots, e_d$ be a basis of $L$.
The $d\times d$ matrix $G \coloneqq (b(e_i, e_j))_{ij}\in M_d(\bZ)$ is called the 
\textit{Gram matrix} of $(L, b)$ with respect to $e_1, \ldots, e_d$. 
Its determinant $\det G$ does not depend on the choice of the basis. 
This integer is called the \textit{determinant} of $L$ and denoted by 
$\det L$ or $\det b$. 
It is clear that $L$ is unimodular if and only if $|\det G| = 1$, 
and that $L$ is even if and only if all diagonal entries of $G$ are even. 

Let $(L_1, b_1)$ and $(L_2, b_2)$ be two lattices. 
An \textit{isometry} $t:L_1 \to L_2$ is a homomorphism of $\bZ$-modules satisfying 
$b_2(t(x), t(y)) = b_1(x, y)$ for all $x, y \in L_1$.  
When $L_1 = L_2$, we say that $t$ is an isometry of $L_1$.  

\begin{theorem}\label{th:SquareCondition}
Let $L$ be an even unimodular lattice of rank $2n$, and let 
$F(X)\in \bZ[X]$ be the characteristic polynomial of an isometry of $L$. 
Then $|F(1)|$, $|F(-1)|$, and $(-1)^n F(1)F(-1)$ are squares. 
\end{theorem}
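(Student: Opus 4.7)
The plan is to prove all three square conditions through a common device: equipping an appropriate finite quotient of $L$ with a non-degenerate alternating $\bQ/\bZ$-valued form.

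For $|F(1)|$, assume $F(1) \neq 0$ (else trivial). Letting $t$ denote the isometry and $N := t - \id$ (invertible on $L \otimes \bQ$), define
\[ \langle x, y \rangle := b(N^{-1}x, y) \bmod \bZ \quad \text{on } L/NL. \]
The isometry relation $t^* = t^{-1}$ (adjoint with respect to $b$) gives well-definedness in both variables and the alternation $\langle y, x\rangle \equiv -\langle x, y\rangle \pmod{\bZ}$. Unimodularity $L = L^\vee$ gives non-degeneracy: if $b(N^{-1}x, \cdot)$ takes values in $\bZ$ on $L$, then $N^{-1}x \in L$, i.e.\ $x \in NL$. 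Expanding $b(tz, tz) = b(z, z)$ with $z = N^{-1}x$ yields $2 b(N^{-1}x, x) \equiv -b(x, x) \pmod{\bZ}$, and the evenness of $L$ then forces $\langle x, x\rangle \equiv 0$. Thus $L/NL$ is a finite abelian group with a non-degenerate symplectic form, which admits a Lagrangian subgroup; so its order $|L/NL| = |\det N| = |F(1)|$ is a square. The case $|F(-1)|$ follows by applying the same construction to the isometry $-t$, whose characteristic polynomial is $F(-X)$.

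For the third claim, the preceding shows $|F(1) F(-1)|$ is a square, so only the sign $(-1)^n F(1) F(-1) \geq 0$ remains. Assume $F(\pm 1) \neq 0$ and decompose $L \otimes \bR$ orthogonally into $t$-invariant pieces grouped by reciprocal pairs/quadruples of eigenvalues. The pieces for real reciprocal pairs $\{\lambda, \lambda^{-1}\}$ and for complex off-circle quadruples $\{\lambda, \bar\lambda, \lambda^{-1}, \bar\lambda^{-1}\}$ have signatures $(k_i, k_i)$ and $(2k_i, 2k_i)$ respectively and contribute $0$ to $p - q$. Each piece for a unit-circle pair $\{e^{\pm i\theta}\}$ of algebraic multiplicity $k_i$ underlies a complex vector space of complex dimension $k_i$ carrying a natural non-degenerate Hermitian form $H(u,v) := b(u, \bar v)$ on the complex generalized eigenspace, so its real signature is $(2 s_+^i, 2 s_-^i)$ with $s_+^i + s_-^i = k_i$. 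Hence $p - q = 2 \sum_i (s_+^i - s_-^i)$, and the condition $p - q \equiv 0 \pmod{8}$ characteristic of even unimodular lattices forces $S := \sum_i k_i = \sum_i (s_+^i + s_-^i)$ to be even. A direct sign count of $F(1) F(-1) = \prod_\lambda (1 - \lambda^2)$ gives $\mathrm{sign}(F(1) F(-1)) = (-1)^R$ with $R$ the sum of multiplicities over real reciprocal pairs, and since $n = R + S + 2C$ (with $C$ the analogous sum for complex quadruples), we obtain $(-1)^n F(1) F(-1) = (-1)^{n+R}|F(1) F(-1)| = (-1)^S |F(1) F(-1)| \geq 0$.

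The main obstacle is the sign analysis in the third step: one must identify the Hermitian structure on each unit-circle generalized eigenspace (not obvious when $t$ is not semisimple there) and then correctly account for parities using the $\pmod 8$ divisibility of $p - q$ for even unimodular lattices.
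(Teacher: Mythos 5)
Your proof is correct. Note that the paper does not actually prove this statement---it simply cites \cite[Theorem 6.1]{Gross-McMullen2002}---so your argument is filling in a proof rather than paralleling one; what you have written is essentially the argument of that reference. Both ingredients check out: for $|F(1)|$, the form $\langle x,y\rangle = b(N^{-1}x,y) \bmod \bZ$ on $L/NL$ with $N = t-\id$ is well defined in the second slot because the $b$-adjoint of $N$ is $t^{-1}-\id = -t^{-1}N$, it is nondegenerate by unimodularity, and evenness gives $\langle x,x\rangle = 0$ via $2b(N^{-1}x,x) = -b(x,x)$; a finite abelian group with a nondegenerate alternating $\bQ/\bZ$-valued form is hyperbolic (a theorem of Wall), hence of square order $|\det N| = |F(1)|$, and replacing $t$ by $-t$ handles $|F(-1)|$. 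For the sign, your orthogonal decomposition of $L\otimes\bR$ is valid since generalized eigenspaces for $\lambda,\mu$ pair trivially unless $\lambda\mu=1$, the Hermitian form $H(u,v)=b(u,\bar v)$ on each unit-circle generalized eigenspace is indeed nondegenerate (each such eigenspace is $b$-isotropic and pairs with its conjugate), and $p-q\equiv 0 \pmod 8$ for even unimodular lattices gives the parity of $S$ you need; only $p-q\equiv 0\pmod 4$ is actually used. The two minor points a referee might ask you to expand are the citation for Wall's hyperbolicity result and the explicit adjoint computation for well-definedness.
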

\begin{proof}
See \cite[Theorem 6.1]{Gross-McMullen2002} or \cite[Theorem 2.1]{Bayer-Fluckiger2025}. 
\end{proof}

\paragraph{Glue groups}
Let $L = (L, b)$ be a lattice. The quotient $L^\vee/L$ is called 
the \textit{glue group} or the \textit{discriminant group} of $L$, and 
denoted by $G(L)$. It is a finite abelian group of order $|\det L|$. In fact, if 
$d_1, \ldots, d_r\in \bZ_{\geq1}$ are the elementary divisors of its Gram matrix 
then $G(L)\cong (\bZ/d_1\bZ) \oplus \cdots \oplus (\bZ/d_r\bZ)$. 
For a prime $p$, let $G(L)_p$ denote the Sylow $p$-subgroup of $G(L)$. 
Then $G(L)$ decomposes as $G(L) = \bigoplus_p G(L)_p$ where $p$ ranges over the 
primes dividing $\det L$. 
We remark that if $p.G(L)_p = 0$ then $G(L)_p$ is isomorphic to the direct sum of 
finitely many copies of $\bZ/p\bZ$ and can be regarded as an $\bF_p$-vector space. 

For $x\in L^\vee$, we write $\bar{x} =  x + L \in G(L)$. The glue group admits 
a nondegenerate \textit{torsion form} $\bar{b}:G(L)\times G(L) \to \bQ/\bZ$ defined by 
\[ \bar{b}(\bar{x}, \bar{y}) = b(x, y) \mod 1. \]
Furthermore, when $L$ is even, the self-product $\bar{b}(\bar{x}, \bar{x})$ is 
well-defined as an element of $\bQ/2\bZ$. 
Any isometry $t$ of $L$ induces an automorphism $\bar{t}:G(L)\to G(L)$ 
that preserves the torsion form $\bar{b}$.

\paragraph{Gluing}
Let $L = (L, b)$ be a lattice, and $S$ a submodule of $L$. 
We say that $S$ is \textit{primitive} in $L$ if $L/S$ is torsion-free, 
or equivalently, if $S = L\cap \bQ S$ in $L\otimes\bQ$. We define 
$S^\perp \coloneqq \{ y \in L \mid b(y,x) = 0 \text{ for all $x\in S$} \}$. 
Note that such a submodule is always primitive. 

\begin{proposition}\label{prop:gluing}
Let $(L_1, b_1)$ and $(L_2, b_2)$ be two even lattices. 
Suppose that there exists an isomorphism $\gamma:G(L_1)\to G(L_2)$ such that  
\[ \overline{b_1}(\bar{x}, \bar{x}) 
+ \overline{b_2}(\gamma(\bar{x}), \gamma(\bar{x})) = 0 
\quad\text{in $\bQ/2\bZ$ for all $\bar{x}\in G(L_1)$}.  \]
Then there exists an even unimodular lattice $L$ such that 
$L_1$ and $L_2$ are primitive sublattices in $L$ with $L_2 = L_1^\perp$. 

Assume further that isometries $t_1$ and $t_2$ of $L_1$ and $L_2$ with 
$\gamma\circ \overline{t_1} = \overline{t_2} \circ \gamma$ are given. Then 
there exists an isometry $t$ of $L$ such that $t|_{L_1} = t_1$ and $t|_{L_2} = t_2$. 
\end{proposition}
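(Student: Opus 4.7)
The plan is to construct $L$ explicitly as an overlattice of the orthogonal direct sum $M := L_1 \oplus L_2$ (equipped with $b := b_1 \oplus b_2$) sitting inside its dual $M^\vee = L_1^\vee \oplus L_2^\vee$, cut out by the graph of $\gamma$. Concretely, I would set
\[ L := \bigl\{ (x_1, x_2) \in L_1^\vee \oplus L_2^\vee \bigm| \bar{x_2} = \gamma(\bar{x_1}) \bigr\}, \]
so that $L/M$ is the graph $\Gamma$ of $\gamma$ inside $G(M) = G(L_1) \oplus G(L_2)$. The sublattices $L_1, L_2 \hookrightarrow L$ (via $x \mapsto (x, 0)$ and $x \mapsto (0, x)$) are mutually orthogonal since $b$ has no cross terms.

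Then I would verify the required properties of $L$ one at a time. Evenness is the only step where the hypothesis is used in a nontrivial way: for $(x_1, x_2) \in L$,
\[ b((x_1, x_2), (x_1, x_2)) = b_1(x_1, x_1) + b_2(x_2, x_2) \equiv \overline{b_1}(\bar{x_1}, \bar{x_1}) + \overline{b_2}(\gamma(\bar{x_1}), \gamma(\bar{x_1})) \equiv 0 \pmod{2}, \]
which is exactly the stated condition on $\gamma$. Unimodularity is a determinant count: since $\gamma$ is an isomorphism, $|\det M| = |G(L_1)| \cdot |G(L_2)| = |G(L_1)|^2$, and $[L : M] = |\Gamma| = |G(L_1)|$, so the general formula $|\det M| = [L:M]^2 \cdot |\det L|$ gives $|\det L| = 1$. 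For primitivity of $L_1$ in $L$: if $(x_1, 0) = n(y_1, y_2)$ in $L$, then $y_2 = 0$ forces $\gamma(\bar{y_1}) = 0$, hence $\bar{y_1} = 0$ and $y_1 \in L_1$; the identity $L_2 = L_1^\perp$ then follows by a rank count.

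For the equivariant second statement, I would extend $t_1, t_2$ $\bQ$-linearly to isometries of $L_1^\vee, L_2^\vee$ and put $t := t_1 \oplus t_2$ on $M^\vee$. The intertwining relation $\gamma \circ \overline{t_1} = \overline{t_2} \circ \gamma$ is exactly what is needed for $t$ to preserve $L$: for $(x_1, x_2) \in L$,
\[ \gamma(\overline{t_1(x_1)}) = \overline{t_2}(\gamma(\bar{x_1})) = \overline{t_2}(\bar{x_2}) = \overline{t_2(x_2)}, \]
so $t(x_1, x_2) \in L$. The restrictions $t|_{L_i} = t_i$ hold by construction. The only subtle point in the whole argument is the evenness check, where the hypothesis must be stated at the level of the quadratic form (values in $\bQ/2\bZ$) rather than the bilinear form (values in $\bQ/\bZ$); everything else is routine index and orthogonality bookkeeping.
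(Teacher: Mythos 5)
Your construction is correct and is precisely the graph-of-$\gamma$ overlattice argument that the paper delegates to the cited gluing lemma of McMullen, so the two proofs coincide in substance. The only point worth making explicit is that integrality of $b$ on all of $L$ (not just evenness on the diagonal) is needed for $L$ to be a lattice and for the unimodularity claim $L=L^\vee$ to make sense, but this follows at once from your diagonal computation by polarization: $2b(v,w)=b(v+w,v+w)-b(v,v)-b(w,w)\in 2\bZ$.
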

\begin{proof}
See \cite[page 5]{McMullen2011}. 
\end{proof}

In \cref{prop:gluing}, we remark that if $(r_1, s_1)$ and $(r_2, s_2)$ are the 
signatures of $L_1$ and $L_2$ respectively then that of $L$ is given by 
$(r_1 + r_2, s_1 + s_2)$; and if $F_1$ and $F_2$ are the characteristic 
polynomials of $t_1$ and $t_2$ respectively then that of $t$ is given by $F_1F_2$.

\paragraph{Twists}
Let $L = (L, b)$ be a lattice, and $t:L\to L$ an isometry. 
We write $\bZ[t+t^{-1}]$ for the subring of $\End(L)$ generated by 
$t + t^{-1}$. Let $a\in \bZ[t+t^{-1}]$ be an element with $\det(a) \neq 0$. Then 
\[ b_a(x, y) = b(ax, y) \quad (x, y \in L) \]
defines a new inner product on $L$. The lattice $(L, b_a)$ is 
called the \textit{twist} of $L$ by $a$, and denoted by $L(a)$. 
We have $\det(L(a)) = \det(a)\det(L)$. 
The isometry $t$ of $L$ is also an isometry of $L(a)$. 
If $L$ is even, then so is $L(a)$, see \cite[Proposition 4.1]{McMullen2011}. 

\begin{theorem}\label{th:twist}
Let $(L,b)$ be a lattice, $t$ an isometry of $L$, and 
$F\in\bZ[X]$ the characteristic polynomial of $t$.  
Let $p$ be a prime not dividing $\det L$, and let $a\in \bZ[t + t^{-1}]$ be 
an element with $\det(a)\neq 0$ and $pL \subset aL$. 
Suppose that $F\bmod p \in \bF_p[X]$ is separable. Then 
\[ G(L(a)) \cong G(L)\oplus G(L(a))_p \quad \text{as $\bZ[t]$-modules} \]
and $p.G(L(a))_p = 0$. Moreover, regarding $G(L(a))_p$ as an $\bF_p$-vector space, 
the characteristic polynomial of $\bar{t}|_{G(L(a))_p}:G(L(a))_p \to G(L(a))_p$ 
is $\gcd(A(X) \bmod p, F(X) \bmod p)$, where 
$A(X)\in \bZ[X]$ is a polynomial satisfying $a = A(t)$. 
\end{theorem}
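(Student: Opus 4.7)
The plan is to replace the twisted dual by a more manageable quotient, extract the claimed direct sum from a canonical Sylow decomposition using the coprimality of $p$ and $\det L$, and finally compute the characteristic polynomial by reducing modulo $p$.

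First I would unwind the definition of the twist: $y \in L(a)^\vee$ iff $b(ay, x) \in \bZ$ for every $x \in L$, i.e.\ iff $ay \in L^\vee$. Since $\det(a) \neq 0$, the element $a$ is invertible on $L \otimes \bQ$, so $L(a)^\vee = a^{-1}L^\vee$. Because $a = A(t)$ commutes with $t$, multiplication by $a$ provides a $\bZ[t]$-equivariant isomorphism
\[ G(L(a)) = a^{-1}L^\vee / L \xrightarrow{\sim} L^\vee / aL. \]

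Next, using $aL \subset L \subset L^\vee$, I would form the short exact sequence of $\bZ[t]$-modules
\[ 0 \to L/aL \to L^\vee / aL \to G(L) \to 0. \]
The assumption $pL \subset aL$ forces $L/aL$ to be annihilated by $p$, hence an $\bF_p$-vector space; meanwhile $p \nmid \det L$ says $|G(L)|$ is coprime to $p$. The Sylow decomposition of a finite abelian group is canonical and therefore $\bZ[t]$-equivariant, so the sequence splits, yielding $G(L(a)) \cong G(L) \oplus (L/aL)$ with the second summand identified as $G(L(a))_p$ and annihilated by $p$. This establishes the first two assertions.

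For the last assertion, I would analyze $L/aL$ modulo $p$: since $pL \subset aL$, we have $L/aL \cong (L/pL)\big/A(t)\!\cdot\!(L/pL)$. The characteristic polynomial of $t$ on the $\bF_p$-vector space $L/pL$ is $F \bmod p$, and the separability hypothesis $F \bmod p = \prod_i f_i$ (distinct irreducibles) makes the action semisimple with minimal polynomial $F \bmod p$ itself; consequently
\[ L/pL \cong \bigoplus_i \bF_p[t]/(f_i) \]
as $\bF_p[t]$-modules. On each summand $\bF_p[t]/(f_i)$, which is a field, $A(t)$ acts as multiplication by $A \bmod f_i$: this is zero when $f_i \mid A \bmod p$ and invertible otherwise. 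Hence $L/aL \cong \bigoplus_{f_i \mid A \bmod p} \bF_p[t]/(f_i)$, whose characteristic polynomial is $\prod_{f_i \mid A \bmod p} f_i = \gcd(A(X)\bmod p,\,F(X)\bmod p)$.

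The main obstacle is the structural identification $L/pL \cong \bigoplus_i \bF_p[t]/(f_i)$ with each cyclic factor appearing exactly once. This relies on combining the separability of $F \bmod p$ (which makes the $t$-action on $L/pL$ semisimple, so its minimal polynomial is squarefree) with the general fact that minimal and characteristic polynomials share the same irreducible factors; these together force the minimal polynomial to equal $F \bmod p$ and make each $f_i$ contribute a single cyclic summand. Once this identification is secured, the rest of the computation—and the well-definedness of $\gcd(A\bmod p,\,F\bmod p)$ under different choices of $A$—follows formally.
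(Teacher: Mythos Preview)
The paper does not prove this theorem; it simply cites \cite[Theorem 4.2]{McMullen2011}. Your argument is correct and is essentially the standard one: the identification $L(a)^\vee = a^{-1}L^\vee$, the exact sequence $0\to L/aL\to L^\vee/aL\to G(L)\to 0$ split via the functorial primary decomposition, and the computation of $L/aL$ as a quotient of the semisimple $\bF_p[t]$-module $L/pL$ all go through as you describe. The only place to tighten the exposition is the ``main obstacle'' paragraph: the logic is simply that $F\bmod p$ is the characteristic polynomial on $L/pL$ and is squarefree by hypothesis, so it equals the minimal polynomial (same irreducible factors, each with multiplicity one), and then the Chinese remainder theorem gives $L/pL\cong\bigoplus_i \bF_p[X]/(f_i)$ directly---no need to argue semisimplicity first and then bootstrap.
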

\begin{proof}
See \cite[Theorem 4.2]{McMullen2011}. 
\end{proof}

\section{Proof of the main theorems}
We prove \cref{th:A,th:B}. 
We refer to \cite{Kondo_K3surfaces} for fundamental results on K3 surfaces. 

\subsection{Proof of \cref{th:A}}
Put $Q(X) = X^2 - 3X + 1$. 
To prove \cref{th:A}, we construct an isometry of a K3 lattice
with characteristic polynomial $Q\cdot \Phi_{50}$ by gluing. 
Let us begin by constructing two lattice isometries whose characteristic polynomials 
are $Q$ and $\Phi_{50}$ respectively, ensuring that they are compatible for gluing.  
In the following, the prime number $3001$ appears. It is a factor of the resultant 
$\Res(Q, \Phi_{50})$, and is referred to as a \textit{feasible prime} for $Q$ 
in \cite{McMullen2016}.

\begin{lemma}\label{lem:rank2lattice}
Let $L = (\bZ^2, b)$ be the lattice of rank $2$ having Gram matrix 
$3001 \begin{pmatrix}
2 & 1 \\
1 & -2
\end{pmatrix}$ with respect to the standard basis. 
Then, it has signature $(1, 1)$, and $G(L) \cong (\bZ/5\bZ)\oplus (\bZ/3001\bZ)^2$. 
Moreover $G(L)_5$ has a generator $\bar{v}$ such that 
$\bar{b}(\bar{v}, \bar{v}) = 2/5$ in $\bQ/2\bZ$. 
\end{lemma}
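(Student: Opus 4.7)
The plan is to verify each claim by direct computation on the explicit Gram matrix
\[
G = 3001\begin{pmatrix} 2 & 1 \\ 1 & -2 \end{pmatrix} = \begin{pmatrix} 6002 & 3001 \\ 3001 & -6002 \end{pmatrix}.
\]
First, for the signature, the inner matrix $\begin{pmatrix} 2 & 1 \\ 1 & -2 \end{pmatrix}$ has characteristic polynomial $X^2 - 5$, hence eigenvalues $\pm\sqrt{5}$, one positive and one negative. Multiplying by the positive scalar $3001$ preserves signs of eigenvalues, so $(L,b)$ has signature $(1,1)$. The determinant is $\det G = 3001^{2}(-4-1) = -5\cdot 3001^{2}$, so $|G(L)| = 5\cdot 3001^{2}$.

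Next, I compute the elementary divisors. Every entry of $G$ is divisible by $3001$, and $3001$ itself appears as an entry, so the first elementary divisor is $d_1 = 3001$. Since $d_1 d_2 = |\det L| = 5\cdot 3001^{2}$, the second is $d_2 = 5\cdot 3001$. As $3001$ is prime and different from $5$, we have $\bZ/d_2\bZ \cong \bZ/5\bZ \oplus \bZ/3001\bZ$, whence
\[
G(L) \;\cong\; \bZ/3001\bZ \oplus \bZ/(5\cdot 3001)\bZ \;\cong\; (\bZ/5\bZ) \oplus (\bZ/3001\bZ)^{2},
\]
and $G(L)_5 \cong \bZ/5\bZ$.

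To exhibit the generator of $G(L)_5$ with the prescribed self-intersection, I look for $v \in L^\vee$ of the form $v = \tfrac{1}{5}(x e_1 + y e_2)$ with $x,y \in \bZ$. The conditions $b(v, e_1), b(v, e_2) \in \bZ$ become $6002 x + 3001 y \equiv 0$ and $3001 x - 6002 y \equiv 0$ modulo $5$. Using $6002 \equiv 2$ and $3001 \equiv 1 \pmod 5$, both congruences reduce to $y \equiv 3x \pmod 5$. I choose $(x,y) = (2,1)$, giving $v = \tfrac{1}{5}(2e_1 + e_2)$, and compute
\[
b(v,v) = \tfrac{1}{25}\bigl(4\cdot 6002 + 4\cdot 3001 - 6002\bigr) = \tfrac{30010}{25} = \tfrac{6002}{5}.
\]
Modulo $2\bZ$, $\tfrac{6002}{5} = 1200 + \tfrac{2}{5} \equiv \tfrac{2}{5}$. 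Since $\bar v \notin 5 \cdot G(L)_5$ (as one can see, for instance, because its image in $G(L)_5$ is nonzero—$v \notin L$—while $G(L)_5 \cong \bZ/5\bZ$), $\bar v$ generates $G(L)_5$ and satisfies $\bar{b}(\bar v,\bar v) = 2/5$ in $\bQ/2\bZ$.

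The entire lemma is thus a short arithmetic verification; there is no genuine obstacle, only the mild care needed to solve the two linear congruences modulo $5$ and to manage the reduction $\bmod\,2\bZ$ of a quantity with denominator $5$. The only place one could slip is in checking that the chosen $(x,y)$ actually gives a generator (not the zero class or a $5$-multiple) and that the self-intersection lands on $2/5$ rather than some other residue such as $-2/5$; if the first candidate $(x,y)$ produces $-2/5$, one simply replaces $\bar v$ by $2\bar v$, which remains a generator since $\gcd(2,5)=1$ and multiplies the self-intersection by $4 \equiv -1 \pmod 5$ within $(1/5)\bZ/2\bZ$.
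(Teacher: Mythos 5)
Your proof is correct and follows essentially the same route as the paper: both compute the determinant $-5\cdot 3001^2$, identify the glue group via elementary divisors, and exhibit the same dual vector $v = \tfrac{1}{5}(2e_1+e_2)$ with $b(v,v)=6002/5\equiv 2/5 \pmod{2\bZ}$. The only difference is that you spell out the elementary-divisor computation and the mod-$5$ congruences that the paper leaves as "straightforward."
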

\begin{proof}
The former assertion is straightforward. 
Let $e_1, e_2$ denote the standard basis of $L$, and put 
$v = \frac{2}{5}e_1 + \frac{1}{5}e_2 \in L \otimes \bQ$.   
Then $b(v, e_1) = 3001$ and $b(v, e_2) = 0$. 
These imply that $v\in L^\vee$. Furthermore, $\bar{v}$ is a generator of 
$G(L)_5\cong \bZ/5\bZ$ since it has order $5$. Because $b(v, v) = 3001\cdot 2/5$, we have 
$\bar{b}(\bar{v}, \bar{v}) = 2/5$ in $\bQ/2\bZ$. 
\end{proof}

\begin{proposition}\label{prop:Q-lattice}
Let $L = (\bZ^2, b)$ be the lattice in Lemma \textup{\ref{lem:rank2lattice}}. 
Then $t = \begin{pmatrix}
1 & 1 \\
1 & 2
\end{pmatrix}$
is an isometry of $L$ with characteristic polynomial $Q(X)$. 
Moreover
\begin{enumerate}
\item $\bar{t}|_{G(L)_5} = - \id_{G(L)_5}$; and 
\item when we consider $G(L)_{3001}$ as an $\bF_{3001}$-vector space, 
the characteristic polynomial of $\bar{t}|_{G(L)_{3001}}$ is 
$Q(X) = (X+121)(X-124)\in \bF_{3001}[X]$. 
\end{enumerate}
\end{proposition}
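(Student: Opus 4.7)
The plan is to verify each assertion by direct computation with the explicit Gram matrix and explicit representative vectors in $L^\vee$, leveraging the basis already introduced in \cref{lem:rank2lattice}.

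First I would verify that $t$ is an isometry of $L$ and has characteristic polynomial $Q$. Since $t$ is symmetric, checking isometry reduces to the matrix identity $tGt = G$, where $G = 3001\begin{pmatrix}2 & 1 \\ 1 & -2\end{pmatrix}$; this is a short $2\times 2$ multiplication. The characteristic polynomial $\det(XI - t) = (X-1)(X-2) - 1 = X^2 - 3X + 1 = Q(X)$ is immediate.

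For (i), I would use the generator $v = \tfrac{2}{5}e_1 + \tfrac{1}{5}e_2$ of $G(L)_5$ from \cref{lem:rank2lattice}. Compute $tv$ in $L\otimes\bQ$ using the matrix $t$ in the standard basis, obtaining $tv = \tfrac{3}{5}e_1 + \tfrac{4}{5}e_2$. Then observe $tv - (-v) = e_1 + e_2 \in L$, so $\bar t(\bar v) = -\bar v$. Since $\bar v$ generates the cyclic group $G(L)_5$, this yields $\bar t|_{G(L)_5} = -\id$.

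For (ii), I would first identify a convenient $\bF_{3001}$-basis of $G(L)_{3001}$. Note that $w_i \coloneqq \tfrac{1}{3001}e_i$ ($i=1,2$) lie in $L^\vee$: indeed $b(w_i, e_j) = M_{ij} \in \bZ$ where $M$ is the matrix $\begin{pmatrix}2 & 1 \\ 1 & -2\end{pmatrix}$. A rank/order count (using $|G(L)| = 5\cdot 3001^2$ and the $\bZ/5\bZ$ factor from \cref{lem:rank2lattice}) shows that $\bar w_1, \bar w_2$ form an $\bF_{3001}$-basis of $G(L)_{3001}$. Applying $t$ to $w_1, w_2$ componentwise gives $tw_1 = w_1 + w_2$ and $tw_2 = w_1 + 2w_2$, so $\bar t|_{G(L)_{3001}}$ is represented in this basis by the matrix $\begin{pmatrix}1 & 1 \\ 1 & 2\end{pmatrix}\bmod 3001$, whose characteristic polynomial is precisely $Q(X) \in \bF_{3001}[X]$. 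For the factorization, expand $(X+121)(X-124) = X^2 - 3X - 15004$ and verify $15005 = 5\cdot 3001$.

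No step looks genuinely hard; the only mild subtlety is the order/basis argument for $G(L)_{3001}$, which is the one point where one must combine information from \cref{lem:rank2lattice} with the global structure of $L^\vee/L$ rather than just chase matrices.
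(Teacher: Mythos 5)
Your proof is correct, and parts of it (the isometry check via $t^\top Gt=G$ and the computation $tv=e_1+e_2-v$ for item (i)) coincide with the paper's argument. Where you genuinely diverge is item (ii): the paper observes that $L=L'(3001)$ is a twist of the determinant-$(-5)$ lattice $L'$ with Gram matrix $\begin{pmatrix}2&1\\1&-2\end{pmatrix}$ and invokes \cref{th:twist} with $p=a=3001$, so that $\gcd(A\bmod p,\,Q\bmod p)=Q\bmod p$ gives the characteristic polynomial on $G(L)_{3001}$ for free. You instead exhibit the explicit dual vectors $w_i=\tfrac{1}{3001}e_i$, check they lie in $L^\vee$ (since $b(w_i,e_j)=M_{ij}$), note that $\langle\bar w_1,\bar w_2\rangle\cong(\bZ/3001\bZ)^2$ already has order $3001^2=|G(L)_{3001}|$, and read off that $\bar t$ acts by $t\bmod 3001$ in this basis. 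Both routes are sound; yours is more elementary and self-contained (it does not need \cref{th:twist} at all for this proposition), while the paper's is shorter on the page and rehearses the twist machinery that is indispensable in the harder rank-$20$ case of \cref{prop:Phi-lattice}, where no such explicit basis computation is practical by hand. Your final arithmetic check $(X+121)(X-124)=X^2-3X-15004\equiv X^2-3X+1 \pmod{3001}$ is also correct.
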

\begin{proof}
We have 
\[ \begin{pmatrix}
1 & 1 \\
1 & 2
\end{pmatrix}^\top
\begin{pmatrix}
3001\cdot 2 & 3001\cdot 1 \\
3001\cdot 1 & 3001\cdot(-2)
\end{pmatrix}
\begin{pmatrix}
1 & 1 \\
1 & 2
\end{pmatrix}
= \begin{pmatrix}
3001\cdot 2 & 3001\cdot 1 \\
3001\cdot 1 & 3001\cdot(-2)
\end{pmatrix}, 
\]
which shows that $t$ is an isometry of $L$. 
It is clear that the characteristic polynomial of $t$ is $Q(X) = X^2 -3X + 1$. 

(i). Let $v =\frac{2}{5}e_1 + \frac{1}{5}e_2$ as in \cref{lem:rank2lattice}. Then 
\[ tv
= \begin{pmatrix}
1 & 1 \\
1 & 2
\end{pmatrix}
\begin{pmatrix}
2/5 \\ 
1/5 
\end{pmatrix}
= \begin{pmatrix}
3/5 \\ 
4/5 
\end{pmatrix}
= \begin{pmatrix}
1 \\ 
1 
\end{pmatrix}
- v. 
\]
This implies that $\bar{t}|_{G(L)_5} = - \id_{G(L)_5}$ since 
$\bar{v}$ generates $G(L)_5$. 

(ii). Let $L'$ be a lattice with Gram matrix 
$\begin{pmatrix}
2 & 1 \\
1 & -2
\end{pmatrix}$. 
Note that $t$ is also an isometry of $L'$ with characteristic polynomial $Q(X)$, 
and that $L = L'(3001)$. 
By applying \cref{th:twist} to $L'$ and $t$ with $p = a = 3001$, we see that 
the characteristic polynomial of $\bar{t}|_{G(L)_{3001}}$ is 
$Q(X) = (X+121)(X-124)\in \bF_{3001}[X]$. 
\end{proof}

Let $\Psi_{50}(Y)\in \bZ[Y]$ be the trace polynomial of $\Phi_{50}(X)$, 
that is, a unique polynomial of degree $10 = \deg(\Phi_{50})/2$ such that 
$\Phi_{50}(X) = X^{10}\Psi_{50}(X + X^{-1})$. 
Put $\zeta \coloneqq \exp\left(\frac{2\pi\sqrt{-1}}{50}\right)$ and 
$K \coloneqq \bQ(\zeta) \cong \bQ[X]/(\Phi_{50})$. 
The field $K$ admits an involution $\iota:K\to K$ uniquely determined by 
$\iota(\zeta) = \zeta^{-1}$. 
We define an inner product $b$ on the $\bQ$-vector space $K$ by 
\[b(x, y) = \Tr_{K/\bQ}\left(\frac{x\,\iota(y)}{\Psi_{50}'(\zeta + \zeta^{-1})}\right) 
\quad (x, y \in K), \]
where $\Psi_{50}'(Y) = \frac{d}{dY}\Psi_{50}(Y)$. 
Then, it follows from \cite[Theorem 8.1]{McMullen2002} that $L \coloneqq (\cO_K, b)$ is 
an even lattice with $|\det L| = |\Phi_{50}(1)\Phi_{50}(-1)| = 5$, 
where $\cO_K$ is the ring of integers of $K$, which is equal to $\bZ[\zeta]$. 
Furthermore, the linear transformation $t$ defined as the multiplication by $\zeta$
is an isometry of $L$ with characteristic polynomial $\Phi_{50}(X)$. 
Note that $\Phi_{50}(X) \bmod 3001$ is separable. In fact, 
for any positive integer $n$ and prime number $p$, $X^n - 1 \bmod p$ is separable if $p\nmid n$. 

We will use a twist of this lattice $L$. 
For some computations below, we make use of computer algebra. We define  
$k \coloneqq \{ x\in K \mid \iota(x) = x \} \subset K$. 
Then $k = \bQ(\zeta + \zeta^{-1})$, and $\bZ[t + t^{-1}] = \bZ[\zeta + \zeta^{-1}] = \cO_k$. 
Put 
\[ u_1 \coloneqq \zeta^2 + 1 + \zeta^{-2}, \quad 
u_2 \coloneqq \sum_{i=0}^5(\zeta^{2i+1} + \zeta^{-(2i+1)}), \quad 
a' \coloneqq \frac{\zeta + \zeta^{-1} - 3}{\zeta + \zeta^{-1} + 2},
\]
and 
\[ a \coloneqq u_1 u_2 a' =
 (\zeta^2 + 1 + \zeta^{-2})\cdot\left(\sum_{i=0}^5(\zeta^{2i+1} + \zeta^{-(2i+1)})\right)
\cdot\frac{\zeta + \zeta^{-1} - 3}{\zeta + \zeta^{-1} + 2}.  
\]
One can check that $u_1, u_2 \in \cO_k^\times$, and $a'\in \cO_k$. 
Hence $a\in \cO_k$. Moreover, we have
\begin{equation}\label{eq:N(a)}
\N_{k/\bQ}(a)
= \N_{k/\bQ}(a')
= \Psi_{50}(3)/\Psi_{50}(-2)
= 3001,  
\end{equation}
where $\N_{k/\bQ}$ is the norm map. 
This implies that $3001 \cO_k \subset a\cO_k$, and thus $3001 L \subset aL$. 
Therefore, we can apply \cref{th:twist} to the current $L$, $t$, and $a$, 
taking $p = 3001$.

\begin{proposition}\label{prop:Phi-lattice}
The twist $L(a) = (\cO_K, b_a)$ of $L$ by $a$ is an even lattice of signature $(2, 18)$. 
Moreover, the following assertions hold: 
\begin{enumerate}
\item $G(L(a)) \cong (\bZ/5\bZ)\oplus(\bZ/3001\bZ)^2$. 
\item $G(L(a))_5$ has a generator $\bar{v}$ such that 
$\overline{b_a}(\bar{v}, \bar{v}) = -2/5$ in $\bQ/2\bZ$. 
\item $\bar{t}|_{G(L(a))_5} = -\id_{G(L(a))_5}$. 
\item When we consider $G(L(a))_{3001}$ as an $\bF_{3001}$-vector space, 
the characteristic polynomial of $\bar{t}|_{G(L (a))_{3001}}$ is 
$Q(X) = (X+121)(X-124)\in \bF_{3001}[X]$. 
\end{enumerate}  
\end{proposition}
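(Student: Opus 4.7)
The plan is to apply \cref{th:twist} with $p = 3001$ to $L$, $t$, and $a$. The hypotheses are already in place in the preamble: $\Phi_{50}(X) \bmod 3001$ is separable (since $3001 \nmid 50$), $3001 L \subset aL$ (from $\N_{k/\bQ}(a) = 3001$, see \eqref{eq:N(a)}), and $\det(a) = \N_{K/\bQ}(a) = \N_{k/\bQ}(a)^2 = 3001^2 \neq 0$. Evenness of $L(a)$ is immediate from \cite[Proposition~4.1]{McMullen2011}.

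For the signature, I would decompose $L(a) \otimes \bR \cong K \otimes_\bQ \bR$ as the direct sum of ten two-dimensional summands, one for each pair of complex embeddings of $K$ (equivalently, the ten real embeddings of $k$). On the summand attached to $\zeta \mapsto \exp(2\pi\sqrt{-1}\,j/50)$, the form $b_a$ is definite with the same sign as $a(\alpha_j)/\Psi_{50}'(\alpha_j)$, where $\alpha_j = 2\cos(2\pi j/50)$. Since $\alpha_j \in (-2, 2)$ one has $a'(\alpha_j) = (\alpha_j - 3)/(\alpha_j + 2) < 0$ at every real embedding, so the relevant sign is determined by $u_1(\alpha_j)\, u_2(\alpha_j)\, \Psi_{50}'(\alpha_j)$. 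The units $u_1, u_2$ are chosen precisely so that exactly one of the ten signs is positive, giving signature $(2, 18)$; this reduces to a direct numerical check on the ten roots of $\Psi_{50}$.

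Part (i) then follows by combining \cref{th:twist} (which yields $G(L(a)) \cong G(L) \oplus G(L(a))_{3001}$ with $3001\cdot G(L(a))_{3001} = 0$) with $|\det L(a)| = |\det(a)| \cdot |\det L| = 3001^2 \cdot 5$ and $G(L) \cong \bZ/5\bZ$ (since $|\det L| = |\Phi_{50}(1)\Phi_{50}(-1)| = 5$, computed from $\Phi_{50}(X) = \Phi_{25}(-X)$). For (iii), using $X^{25}-1 \equiv (X-1)^{25} \pmod 5$ one obtains $\Phi_{50}(X) \equiv (X+1)^{20} \pmod 5$; hence the integer $c$ by which $\bar t$ acts on $G(L)_5 \cong \bZ/5\bZ$ satisfies $(c+1)^{20} \equiv 0 \pmod 5$, forcing $c \equiv -1$. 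This transfers to $G(L(a))_5$ because the decomposition in \cref{th:twist} is one of $\bZ[t]$-modules. For (iv), \cref{th:twist} identifies the characteristic polynomial of $\bar t|_{G(L(a))_{3001}}$ as $\gcd(A(X), \Phi_{50}(X)) \bmod 3001$, where $A \in \bZ[X]$ satisfies $a = A(\zeta)$. One computes $A(X)$ by inverting $(\zeta+1)^2$ modulo $\Phi_{50}(\zeta)$ and checks that the gcd equals $Q(X) \equiv (X+121)(X-124) \pmod{3001}$; this is by design, since the factor $\zeta+\zeta^{-1}-3$ of $a'$ vanishes on the $Q$-eigenvalues while $\N_{k/\bQ}(a') = \Psi_{50}(3)/\Psi_{50}(-2)$ picks out the single prime $3001$.

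Part (ii) is the subtlest point and the main obstacle. A generator $\bar v \in G(L(a))_5$ lifts to a generator $v \in L^\vee$ of $G(L)_5 \cong \bZ/5\bZ$, since the direct summand $G(L)$ in \cref{th:twist} is realized by the inclusion $L^\vee \subset a^{-1} L^\vee = L(a)^\vee$. Choosing a concrete $v$ via the inverse different of $\cO_K$, one would compute $\overline{b_a}(\bar v, \bar v) = \Tr_{K/\bQ}(a\, v\, \iota(v)/\Psi_{50}'(\zeta + \zeta^{-1})) \bmod 2\bZ$. The role of the units $u_1, u_2$ is exactly to tune this value to $-2/5$ and to produce signature $(2,18)$, without affecting part (iv) (since units remain units modulo $3001$). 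The target value $-2/5$ is the one compatible for gluing with \cref{lem:rank2lattice}, as $2/5 + (-2/5) \equiv 0$ in $\bQ/2\bZ$. The explicit execution of this trace calculation is where computer algebra becomes essential.
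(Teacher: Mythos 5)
Your proposal follows essentially the same route as the paper: evenness from \cite[Proposition~4.1]{McMullen2011}, the signature from the signs of the ten conjugates of $a/\Psi_{50}'(\zeta+\zeta^{-1})$ (you rederive the Gross--McMullen criterion that the paper cites, and both arguments end in a numerical check), parts (i), (iii), (iv) exactly as in the paper via \cref{th:twist} with $p=3001$, the congruence $\Phi_{50}\equiv (X+1)^{20}\pmod 5$ on the one-dimensional $\bF_5$-space, and the factorization $A = U_1U_2hQ \bmod \Phi_{50}$. The only difference is in (ii): the paper exhibits an explicit witness $v\in\frac{1}{5}\bZ[\zeta]$ with $b_a(v,\zeta^j)\in\bZ$ and $b_a(v,v)=-142/5\equiv -2/5$ in $\bQ/2\bZ$, whereas you only describe the procedure for finding such a $v$ and defer the decisive check (that the square class is $\{2/5,8/5\}$ rather than $\{4/5,6/5\}$) to computer algebra --- the same reliance on computation the paper has, minus the concrete certificate.
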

\begin{proof}
As mentioned in \S\ref{sec:Lattices}, the twist $L(a)$ is even since so is $L$. 
Furthermore, it follows from \cite[Theorem 4.2]{Gross-McMullen2002} that the positive signature 
$r$ of $L(a)$ is given by  
\[ r = 2\cdot\#\{ \sigma\in \Gal(k/\bQ) \mid
\sigma(a/\Psi_{50}'(\zeta + \zeta^{-1})) > 0 \}. \]
Using computer algebra, one can check that 
$\sigma\in \Gal(k/\bQ)$ with $\sigma(\zeta + \zeta^{-1}) = \zeta^7 + \zeta^{-7}$ 
is the only element satisfying $\sigma(a/\Psi_{50}'(\zeta + \zeta^{-1})) > 0$, 
see \cref{table:approx}. 
Hence $r = 2$, and the signature of $L(a)$ is $(2,18)$. 

\begin{table}[b]\centering
\begin{tabular}{c| r r r r r} \hline 
$\sigma(\zeta + \zeta^{-1})$ & $\zeta + \zeta^{-1}$ & $\zeta^3 + \zeta^{-3}$ & $\zeta^7 + \zeta^{-7}$ & $\zeta^9 + \zeta^{-9}$ & $\zeta^{11} + \zeta^{-11}$ \\
$\sigma(a/\Psi_{50}'(\zeta + \zeta^{-1}))$ & $-0.11372$ & $-0.067094$ & $0.028027$ & $-0.026605$ & $-0.11141$ \\
\hline
\end{tabular}

\vspace{0.5em}

\begin{tabular}{c| r r r r r}
\hline
$\sigma(\zeta + \zeta^{-1})$ & $\zeta^{13} + \zeta^{-13}$ & $\zeta^{17} + \zeta^{-17}$ & $\zeta^{19} + \zeta^{-19}$ & $\zeta^{21} + \zeta^{-21}$ & $\zeta^{23} + \zeta^{-23}$ \\
$\sigma(a/\Psi_{50}'(\zeta + \zeta^{-1}))$ & $-0.10565$ & $-0.029497$ & $-0.5185$ & $-1.5061$ & $-2.5493$ \\
\hline
\end{tabular}
\caption{Approximate values of $\sigma(a/\Psi_{50}'(\zeta + \zeta^{-1}))$. Note that 
$\sigma\in \Gal(k/\bQ)$ is determined by its value at $\zeta + \zeta^{-1}$.}
\label{table:approx}
\end{table}

(i). Since $|\det L| = 5$, we have $G(L) = \bZ/5\bZ$. Moreover, we have 
$\N_{K/\bQ}(a) = \N_{k/\bQ}(a^2) = 3001^2$ by \eqref{eq:N(a)}, and thus, 
\[ |\det(L(a))| = |\det(a)\det(L)| = |\N_{K/\bQ}(a)\det(L)| = 3001^2 \cdot 5. \]
Hence, the assertion follows from the former statement of \cref{th:twist}. 

(ii). Put 
\begin{multline*}
v = \frac{1}{5}(2 + 3\zeta + 2\zeta^2 + 3\zeta^3 + 2\zeta^4 + \zeta^5 - \zeta^6 
+ \zeta^7 - \zeta^8 + \zeta^9 + \zeta^{10}\\ 
- \zeta^{11} + \zeta^{12} - \zeta^{13}
+ \zeta^{14} - 3\zeta^{15} - 2\zeta^{16} - 3 \zeta^{17} - 2\zeta^{18} - 3\zeta^{19}).  
\end{multline*}
Then, using computer algebra, one can verify that $b_a(v, \zeta^j)$ is an integer 
for $j = 0, \ldots, 19$, and $b_a(v,v) = -142/5$. Hence, $v\in L(a)^\vee$, and
$\bar{v}$ is the desired generator of $G(L(a))_5$. 

(iii). We have $(t + \id_{L(a)^\vee})^{20}.L(a)^\vee \subset 5L(a)^\vee$ since 
$\Phi_{50}(X) \bmod 5 = (X+1)^{20}$. Thus 
$(\bar{t}|_{G(L(a))_5} + \id_{G(L(a))_5})^{20}.G(L(a))_5 = 0$, which leads to 
\[ (\bar{t}|_{G(L(a))_5} + \id_{G(L(a))_5}).G(L(a))_5 = 0 \] 
since $G(L(a))_5$ is a one-dimensional $\bF_5$-vector space. 
Hence $\bar{t}|_{G(L(a))_5} = -\id_{G(L(a))_5}$. 

(iv). Note that $(X+121)(X-124)\mid(\Phi_{50}(X) \bmod 3001)$ in $\bF_{3001}[X]$. 
Let $A(X)\in\bZ[X]$ be a polynomial satisfying $a = A(\zeta)$. Since 
\[  a' 
= \frac{\zeta + \zeta^{-1} - 3}{\zeta + \zeta^{-1} + 2} 
= \frac{\zeta^2 - 3\zeta + 1}{\zeta^2 + 2 \zeta + 1}
= \frac{Q(\zeta)}{(\zeta + 1)^2},
\]
we can write
\[ A(X) = U_1(X)U_2(X)h(X)Q(X) \mod \Phi_{50}(X), \] 
where $U_i(X)$ is a polynomial with $u_i = U_i(\zeta)$ ($i = 1,2$) and  
$h(X)$ is the inverse of $(X+1)^2$ in $\bZ[X]/(\Phi_{50})$. 
Hence $(Q \bmod 3001)$ is a common divisor of $(A \bmod 3001)$ and $(\Phi_{50} \bmod 3001)$ 
in $\bF_{3001}[X]$. Since $G(L(a))_{3001}$ is two-dimensional over $\bF_{3001}$, 
The latter assertion of \cref{th:twist} shows that the characteristic polynomial of 
$\bar{t}|_{G(L(a))_{3001}}$ is $Q(X)$. 
\end{proof}

\begin{remark}
One can show \cref{prop:Phi-lattice} by using matrix representation. 
For reference, if $G = (g_{ij})_{ij}$ is the Gram matrix of $L(a)$ with respect to the 
basis $1, \zeta, \zeta^2, \ldots, \zeta^{19}$, then each entry $g_{ij}$ depends only on 
the difference $i-j$, and the first row of $G$ is given by
\[ (-10, 8, -6, 3, -1, -2, 3, -3, 3, -3, 3, -3, 3, -3, 3, -3, 3, -3, 3, -3). \]
\end{remark}

We glue the two lattices in \cref{prop:Q-lattice,prop:Phi-lattice}.

\begin{theorem}\label{th:QPhi-lattice}
There exists an isometry $t$ of a K3 lattice $\Lambda$ 
with characteristic polynomial $Q\Phi_{50}$ such that 
$\Lambda$ contains a $t$-invariant primitive sublattice $L_1$ having Gram matrix 
$3001
\begin{pmatrix}
2 & 1 \\
1 & -2
\end{pmatrix}$ 
and the characteristic polynomial of $t|_{L_1}$ is $Q$. 
\end{theorem}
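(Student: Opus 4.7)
The plan is to glue $L_1$ from \cref{prop:Q-lattice} (signature $(1,1)$, carrying the isometry $t_1$ with characteristic polynomial $Q$) and $L_2\coloneqq L(a)$ from \cref{prop:Phi-lattice} (signature $(2,18)$, carrying $t_2$ with characteristic polynomial $\Phi_{50}$) via \cref{prop:gluing}. The signatures add to $(3,19)$ and the characteristic polynomials multiply to $Q\Phi_{50}$, so any successful gluing automatically yields an isometry $t$ of a K3 lattice $\Lambda$ of the required form, with $L_1$ embedded in $\Lambda$ as a primitive $t$-invariant sublattice having the prescribed Gram matrix. The task therefore reduces to constructing an isomorphism $\gamma:G(L_1)\to G(L_2)$ which commutes with the $\bar t_i$ and satisfies $\overline{b_1}(\bar x,\bar x) + \overline{b_2}(\gamma\bar x,\gamma\bar x) = 0$ in $\bQ/2\bZ$ for every $\bar x$.

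Both glue groups are $(\bZ/5\bZ)\oplus(\bZ/3001\bZ)^2$, and the torsion form vanishes across different Sylow components, so I will build $\gamma$ prime by prime. On the $5$-part, \cref{lem:rank2lattice,prop:Phi-lattice} supply generators $\bar v_i\in G(L_i)_5$ with self-products $2/5$ and $-2/5$, and \cref{prop:Q-lattice,prop:Phi-lattice} show that $\bar t_i$ acts as $-\id$ on both $G(L_i)_5$; thus $\gamma_5:\bar v_1\mapsto \bar v_2$ trivially satisfies both requirements, and any cross-term condition is automatic.

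The main content is the $3001$-part, where $\bar t_i|_{G(L_i)_{3001}}$ has characteristic polynomial $(X+121)(X-124)\in\bF_{3001}[X]$ with distinct roots. Each $G(L_i)_{3001}$ therefore decomposes into two one-dimensional $\bar t_i$-eigenlines, and the identity $\bar b(v,v) = \lambda^2 \bar b(v,v)$, combined with the fact that neither $(-121)^2$ nor $124^2$ equals $1$ in $\bF_{3001}$, forces each eigenline to be isotropic; by nondegeneracy, the two eigenlines of each $G(L_i)_{3001}$ then pair nondegenerately against one another (consistently with Vieta's $(-121)\cdot 124\equiv 1 \pmod{3001}$, which is what permits these eigenvalues to coexist under an isometry). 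Any equivariant $\gamma_{3001}$ is therefore described by a pair of nonzero scalars $(\alpha,\beta)\in(\bF_{3001}^\times)^2$ scaling corresponding eigenlines, and the quadratic-form-reversal condition collapses to a single equation $\alpha\beta = c$ for an explicit $c\in\bF_{3001}^\times$, which is trivially solvable. Feeding $\gamma\coloneqq \gamma_5\oplus\gamma_{3001}$ into \cref{prop:gluing} then produces the desired $\Lambda$ and $t$. I expect this eigenline-and-isotropy analysis on the $3001$-part to be the only nontrivial step; should extracting the explicit constant $c$ prove awkward, I would fall back on the explicit Gram matrix of $L(a)$ recorded in the remark following \cref{prop:Phi-lattice} and verify everything by direct matrix computation.
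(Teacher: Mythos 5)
Your proposal is correct and follows essentially the same route as the paper: glue the rank-$2$ lattice of \cref{prop:Q-lattice} to the twist $L(a)$ of \cref{prop:Phi-lattice} via \cref{prop:gluing}, building the anti-isometry of glue groups prime by prime, with the $5$-part handled exactly as in the paper ($\bar v_1\mapsto\bar v_2$, self-products $2/5$ and $-2/5$, both $\bar t_i$ acting as $-\id$, and no cross-terms between Sylow components). The one point of divergence is the prime $3001$: the paper simply invokes \cite[Theorem 3.1]{McMullen2011}, citing the matching characteristic polynomials $(X+121)(X-124)$ from \cref{prop:Q-lattice}(ii) and \cref{prop:Phi-lattice}(iv), to obtain $\gamma_{3001}$ as a black box, whereas you construct it by hand: each $G(L_i)_{3001}$ splits into two $\bar t_i$-eigenlines, these are isotropic because $121^2\not\equiv 1$ and $124^2\not\equiv 1 \pmod{3001}$, they pair nondegenerately with each other because $(-121)\cdot 124\equiv 1$, and an equivariant anti-isometry is then a choice of scalars with $\alpha\beta=c$. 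This is a correct, self-contained unfolding of precisely the argument underlying the cited theorem in the separable case with no eigenvalue $\pm1$; it buys independence from the reference at the cost of a small computation the paper outsources, and both versions feed the same $\gamma=\gamma_5\oplus\gamma_{3001}$ into \cref{prop:gluing} to produce $\Lambda$ and $t$.
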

\begin{proof}
Let $(L_1, b_1)$ and $t_1$ (resp. $(L_2, b_2)$ and $t_2$) be the lattice and its 
isometry in \cref{prop:Q-lattice} (resp. \cref{prop:Phi-lattice}). 
There exists an isomorphism $\gamma_5:G(L_1)_5 \to G(L_2)_5$ with 
$\overline{b_1}(\bar{x}, \bar{x}) + \overline{b_2}(\gamma_5(\bar{x}), \gamma_5(\bar{x})) = 0$ 
in $\bQ/2\bZ$ for any $\bar{x}\in G(L_1)_5$ by \cref{lem:rank2lattice} and 
(ii) of \cref{prop:Phi-lattice}. Furthermore 
$\gamma_5\circ \overline{t_1}|_{G(L_1)_5} 
= - \gamma_5 
= \overline{t_2}|_{G(L_2)_5} \circ \gamma_5$. 
On the other hand, it follows from \cite[Theorem 3.1]{McMullen2011}
together with (ii) of \cref{prop:Q-lattice} and (iv) of 
\cref{prop:Phi-lattice} that there exists an isomorphism 
$\gamma_{3001}:G(L_1)_{3001}\to G(L_2)_{3001}$ such that 
$\overline{b_1}(\bar{x}, \bar{x}) 
+ \overline{b_2}(\gamma_{3001}(\bar{x}), \gamma_{3001}(\bar{x}))
= 0$ in $\bQ/2\bZ$ for any $\bar{x}\in G(L_1)_{3001}$ and 
$\gamma_{3001}\circ \overline{t_1}|_{G(L_1)_{3001}} 
= \overline{t_2}|_{G(L_2)_{3001}} \circ \gamma_{3001}$. 
Hence, the resulting isometry $t$ obtained by applying 
\cref{prop:gluing} to $\gamma_5\oplus \gamma_{3001}:G(L_1) \to G(L_2)$
is the desired one. The proof is complete. 
\end{proof}

We are now ready to prove \cref{th:A}. 

\begin{proof}[Proof of Theorem \textup{\ref{th:A}}]
Let $t$ be the isometry of a K3 lattice $\Lambda$ in \cref{th:QPhi-lattice}. 
Put $L_2 = L_1^\perp$. Note that $L_2$ is also $t$-invariant, and 
the characteristic polynomial of $t|_{L_2}$ is $\Phi_{50}$. 
The signature of $L_2$ is $(2, 18)$ since that of $L_1$ is $(1,1)$. 
This implies that there exists a root $\delta$ of $\Phi_{50}$ such that 
the signature of the subspace 
$\{v\in L \otimes \bR \mid (t^2 - (\delta + \delta^{-1})t + 1).v= 0 \}$ 
is $(2, 0)$. Let $\omega \in L\otimes\bC$ be a nonzero eigenvector of $t$ corresponding 
to $\delta$. Then, by surjectivity of the period mapping 
(see \cite[Theorem 6.9]{Kondo_K3surfaces}), 
there exists a K3 surface $S$ and an isomorphism $\alpha:H^2(S, \bZ)\to \Lambda$ 
of lattices such that $\alpha(\omega_S) = \omega$. We have 
\[ \alpha(P_S)
= \alpha(\{x\in H^2(S, \bZ) \mid \langle x, \omega_S \rangle = 0 \})
= L_2^\perp 
= L_1,  \] 
which implies that $\rho_S = \rk L_1 = 2$. It also implies that $P_S$ is indefinite, 
and thus, $S$ is projective (see \cite[Proposition 4.11]{Kondo_K3surfaces}). 
Moreover, $\alpha^{-1}\circ t \circ \alpha$ preserves the K\"ahler cone 
because $P_S \cong L_1$ has no element with self-intersection number $-2$.  
Hence, by the Torelli theorem (see \cite[Theorem 6.1]{Kondo_K3surfaces}), there exists 
an automorphism $\phi$ of $S$ such that $\phi^* = \alpha^{-1}\circ t \circ \alpha$. 
This completes the proof. 
\end{proof}

\subsection{Proof of \cref{th:B}}
Let $S$ be a projective K3 surface, and $\phi:S\to S$ an 
automorphism of $S$. 
We write $F(X)$ and $g(X)$ for the characteristic polynomials of 
$\phi^*:H^2(S)\to H^2(S)$ and $\phi^*|_{P_S}:P_S\to P_S$. 
It follows from \cite[Corollary 8.13]{Kondo_K3surfaces} that $F$ can be written as 
\begin{equation}\label{eq:F=gPhi^m}
F(X) = g(X)\Phi_l(X)^m 
\end{equation}
for some $l$ and $m \in \bZ_{\geq 1}$, where
$\Phi_l(X)$ is the $l$-th cyclotomic polynomial. 

We now assume that $\rho_S = 2$. Then, Equation \eqref{eq:F=gPhi^m}
implies that $m\varphi(l)  = 22 - \deg g = 20$, 
where $\varphi(l): = \#(\bZ/l\bZ)^\times = \deg \Phi_l$. 
Hence, $m$ and $\varphi(l)$ are divisors of $20$, and in particular, 
\[ l \in \{1,2, 3, 4, 6, 5, 8, 10, 12, 11, 22, 25, 33, 44, 50, 66\}. \] 
We further assume that the dynamical degree $\lambda(\phi)$ is greater than $1$. 
Then $\lambda(\phi)$ is a Salem number of degree $2$, and we can write 
$g(X) = X^2 - \tau X + 1$, where 
$\tau \coloneqq \lambda(\phi) + \lambda(\phi)^{-1} \in \bZ_{\geq 3}$.  

\begin{proposition}\label{prop:sq_upm2}
We have $l = 1, 2, 5, 10, 25,$ or $50$. Moreover, putting
\begin{equation}\label{eq:epsilon}
\epsilon = \begin{cases}
  1 & \text{if $l = 1, 5, 25$} \\
  -1 & \text{if $l = 2, 10, 50$,}
\end{cases}
\end{equation}
the integer $\tau + 2\epsilon$ is a square number, and 
if $l \neq 1,2$ then $5(\tau - 2\epsilon)$ is a square number. 
\end{proposition}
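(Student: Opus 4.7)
The plan is to apply \cref{th:SquareCondition} to the K3 lattice $H^2(S,\bZ)$, which has rank $22 = 2\cdot 11$, together with its isometry $\phi^*$. This produces the two necessary conditions that $|F(1)|$ and $|F(-1)|$ be perfect squares of non-negative integers. Inserting $g(X) = X^2 - \tau X + 1$ into \eqref{eq:F=gPhi^m} and using $\tau \geq 3$, these become
\[ (\tau - 2)\,\Phi_l(1)^m \text{ is a square}, \qquad (\tau + 2)\,|\Phi_l(-1)|^m \text{ is a square}. \]

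The next step is to recall the elementary values $\Phi_l(\pm 1)\in\{0,1\}\cup\{\text{primes}\}$ governed by the familiar rules $\Phi_l(1) = p$ iff $l = p^k$ is a prime power, and $\Phi_l(-1) = \Phi_{2l}(1)$ for odd $l>1$ (so $\Phi_l(-1) = p$ iff $l = 2p^k$ for an odd prime $p$). Tabulating these, along with $m = 20/\varphi(l)$, for each of the sixteen candidate values of $l$, I would extract the squarefree parts $s_+$ and $s_-$ of $\Phi_l(1)^m$ and $|\Phi_l(-1)|^m$ respectively (with the convention that the corresponding condition is vacuous when $\Phi_l(\pm 1) = 0$). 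The square conditions then take the uniform form
\[ \tau - 2 = s_+ a^2, \qquad \tau + 2 = s_- b^2, \qquad a, b \in \bZ_{\geq 0}, \]
so the problem reduces to the diophantine equation $s_- b^2 - s_+ a^2 = 4$ with $\tau\geq 3$.

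The heart of the proof is the resulting elimination. From the table, $s_+ = s_- = 1$ holds for $l \in \{3, 4, 6, 11, 12, 22, 33, 44, 66\}$, forcing $b^2 - a^2 = 4$ and hence $\tau = 2$, contradicting $\tau \geq 3$; for $l = 8$, $s_+ = s_- = 2$ gives $b^2 - a^2 = 2$, which again has no integer solutions. These are precisely the eliminated values, leaving $l \in \{1, 2, 5, 10, 25, 50\}$ as asserted in the first sentence of the proposition.

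For the surviving six values, the same table yields the explicit square conditions on $\tau$. On the side where $\Phi_l(\pm 1) = 0$ or $\Phi_l(\pm 1)^m$ is a perfect square, one gets $\tau + 2\epsilon$ a square (with $\epsilon$ matching \eqref{eq:epsilon}); on the opposite side, for $l \in \{5, 10, 25, 50\}$, one has $|\Phi_l(\mp 1)|^m \in \{5, 5^5\}$, whose squarefree part is $5$, yielding $5(\tau - 2\epsilon)$ a square. The main obstacle I anticipate is simply keeping the $16$-case analysis organized and correctly tracking the parity of the exponent of each prime in $\Phi_l(\pm 1)^m$; once that table is in hand, both the elimination step and the verification for the surviving cases are routine, and no deeper lattice-theoretic input is required beyond \cref{th:SquareCondition}.
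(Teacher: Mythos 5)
Your proposal is correct and follows essentially the same route as the paper: apply \cref{th:SquareCondition} to $H^2(S,\bZ)$ to get that $|F(1)|=(\tau-2)|\Phi_l(1)|^m$ and $|F(-1)|=(\tau+2)|\Phi_l(-1)|^m$ are squares, then use the values of $\Phi_l(\pm1)$ to eliminate the bad $l$ via the impossibility of $b^2-a^2=4$ (for $\tau\ge 3$) and $b^2-a^2=2$, and to read off the square conditions in the surviving cases. The only difference is organizational: the paper first disposes of all even $m$ at once (since then both $\tau-2$ and $\tau+2$ would be squares) and then treats $m=20,5,1$ separately, whereas you run a uniform sixteen-entry table; the substance is identical.
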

\begin{proof}
Since $H^2(S,\bZ)$ is an even unimodular 
lattice, it follows from \cref{th:SquareCondition} that 
\[ |F(1)| = (\tau - 2)|\Phi_l(1)|^m \quad\text{and}\quad 
|F(-1)| = (\tau + 2)|\Phi_l(-1)|^m
\]
are squares. 
Suppose that $m=20$. Then $\varphi(l) = 1$, which means that $l = 1$ or $2$. 
If $l = 1$ then $|F(-1)| = (\tau + 2)\cdot |-2|^{20}$, and 
$\tau + 2$ must be a square. 
If $l = 2$ then $|F(1)| = (\tau - 2)\cdot 2^{20}$, and 
$\tau - 2$ must be a square. 
Suppose then that $m < 20$. 
If $m$ were even then both $\tau + 2$ and $\tau - 2$ would be squares, 
but it is impossible. Hence $m$ is odd, which yields $m = 1$ or $5$ 
since $m$ is a divisor of $20$. 

Suppose that $m = 5$. Then $\varphi(l) = 4$, which means that 
$l \in \{ 5, 8, 10, 12\}$. 
If $l$ were $8$ then 
$|F(1)| = (\tau - 2)\Phi_{8}(1)^5 = (\tau-2)\cdot 2^5$ and 
$|F(-1)| = (\tau + 2)\Phi_{8}(-1)^5 = (\tau + 2) \cdot 2^5$, and 
therefore $2(\tau-2)$ and $2(\tau+2)$ would be squares. 
However, such a $\tau\in \bZ_{\geq 3}$ does not exist. 
Hence $l\neq 8$. Similarly, we can get $l \neq 12$. 
If $l = 5$ then 
$|F(1)| = (\tau - 2)\Phi_{5}(1)^5 = (\tau-2)\cdot 5^5$ and 
$|F(-1)| = (\tau + 2)\Phi_{5}(-1)^5 = \tau + 2$, and hence 
$\tau + 2$ and $5(\tau - 2)$ are squares. 
Similarly, in the case $l = 10$, we can show that 
$\tau - 2$ and $5(\tau + 2)$ are squares. 

Suppose that $m = 1$. Then $\varphi(l) = 20$, which means that 
$l \in \{ 25, 33, 44, 50, 66 \}$. By similar arguments as above, 
the cases $l = 33$, $44$, and $66$ are ruled out, and it can be shown that: 
if $l = 25$ then $\tau + 2$ and $5(\tau - 2)$ are squares; and 
if $l = 50$ then $\tau - 2$ and $5(\tau + 2)$ are squares.  
The proof is complete. 
\end{proof}

Let $\epsilon\in \{1, -1\}$ be as in \cref{prop:sq_upm2}, and let 
$\alpha\in \bZ_{\geq 0}$ be the non-negative integer satisfying
\begin{equation*}
\tau + 2\epsilon = \alpha^2. 
\end{equation*}
Since $\tau \geq 3$, if $\epsilon = 1$ then $\alpha \geq 3$, and if $\epsilon = -1$ then
$\alpha \geq 1$. This implies that 
\begin{equation}\label{eq:upperboundofcT}
\cT \subset \{2\}\cup \{\gamma^2 - 2 \mid \gamma\in \bZ_{\geq 3}\}\cup
\{ \gamma^2 + 2 \mid \gamma\in \bZ_{\geq 1} \},  
\end{equation}
where $\cT$ is defined in \eqref{eq:defofcT}. Note that the sets 
$\{\gamma^2 - 2 \mid \gamma\in \bZ_{\geq 3}\}$ and 
$\{ \gamma^2 + 2 \mid \gamma\in \bZ_{\geq 1} \}$ are disjoint.

\begin{lemma}\label{lem:rulingout}
Suppose that $\epsilon = -1$. Then $\alpha \notin \{2, 3, 5, 7, 13, 17\}$. 
\end{lemma}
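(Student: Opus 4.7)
The plan is to split into cases according to the value of $l$, which under the assumption $\epsilon = -1$ lies in $\{2, 10, 50\}$ by \eqref{eq:epsilon}, and to derive a contradiction in each case from $\alpha \in \{2, 3, 5, 7, 13, 17\}$.

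First I would dispose of the case $l = 2$ immediately via \cref{th:HKL}: here $\phi^*\omega_S = -\omega_S$, so the existence of $\phi$ with $\tau = \alpha^2 - 2\epsilon = \alpha^2 + 2$ forces $\alpha \in A_{-1} = \bZ_{\geq 4} \setminus \{5, 7, 13, 17\}$, which is disjoint from the forbidden set $\{2, 3, 5, 7, 13, 17\}$.

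For the remaining cases $l \in \{10, 50\}$, I would invoke the second assertion of \cref{prop:sq_upm2}, which says that $5(\tau - 2\epsilon) = 5(\tau + 2) = 5(\alpha^2 + 4)$ is a perfect square; this forces $5 \mid (\alpha^2 + 4)$, equivalently $\alpha^2 \equiv 1 \pmod 5$, i.e., $\alpha \equiv \pm 1 \pmod 5$. A direct check shows that $2, 3, 5, 7, 13, 17$ reduce to $2, 3, 0, 2, 3, 2 \pmod 5$ respectively, none of which is $\pm 1$, giving the contradiction.

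There is no real obstacle here; the main insight is that in the second case one does not need to solve the Pell-type equation $\alpha^2 - 5\gamma^2 = -4$ hiding behind the two square conditions, since a simple mod-$5$ reduction already rules out all six exceptional values at once.
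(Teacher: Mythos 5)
Your proof is correct and follows essentially the same route as the paper: dispose of $l=2$ via \cref{th:HKL}, and for $l\in\{10,50\}$ use the square condition $5(\tau+2)=5(\alpha^2+4)$ from \cref{prop:sq_upm2} to force $5\mid(\alpha^2+4)$, which none of the six exceptional values satisfies. You merely make the mod-$5$ check more explicit than the paper does.
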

\begin{proof}
We have $l = 2$, $10$, or $50$ since $\epsilon = -1$.  
If $l = 2$ then $\phi^*\omega_S = -\omega_S$, and thus
$\alpha \notin \{2, 3, 5, 7, 13, 17\}$ by \cref{th:HKL}. 
Suppose that $l = 10$ or $50$. Then, $5(\tau + 2) = 5(\alpha^2 + 4)$ 
is a square number by \cref{prop:sq_upm2}. 
In particular, $\alpha^2 + 4$ is divisible by $5$. 
Hence, $\alpha$ cannot be any of $2, 3, 5, 7, 13, 17$. 
\end{proof}

\begin{proof}[Proof of Theorem \textup{\ref{th:B}}]
By \eqref{eq:upperboundofcT} and \cref{lem:rulingout}, we have 
\[
\cT \subset \{2\}\cup \{\alpha^2 - 2 \mid \alpha\in \bZ_{\geq 3}\}\cup
\{ \alpha^2 + 2 \mid \alpha\in \bZ_{\geq 1}\setminus\{ 2, 3, 5, 7, 13, 17\} \}. 
\] 
Together with the reverse inclusion \eqref{eq:lowerboundofcT}, 
this completes the proof of \cref{th:B}. 
\end{proof}

\paragraph{Acknowledgments}
This work is supported by JSPS KAKENHI Grant Number JP24KJ0044.

\bibliographystyle{plain}
\bibliography{refs} 
\end{document}